\newtheorem{theorem}{Theorem}[section]
\newtheorem{remark}[theorem]{ Remark}
\newtheorem{proposition}[theorem]{Proposition}
\newtheorem{lemma}[theorem]{Lemma}
\newcommand{\RR} {\mathbb R}
\newcommand{\HH} {\mathbb H}
\newcommand{\V}{\Vert}
\newcommand{\pa} {\partial}
\newcommand{\Cal} {\mathcal}
\newcommand{\beq} {\begin{equation}}
\newcommand{\eeq} {\end{equation}}
\begin{document}
\title[The Weinstein functional on Riemannian manifolds ]{ Extremal values of the (fractional) Weinstein functional on the hyperbolic space }

\author{Mayukh Mukherjee}
\thanks{The author was partially supported by NSF grant  DMS-1161620.}

\address{Department of Mathematics\\ UNC-Chapel Hill\\ CB \#3250, Phillips Hall
\\ Chapel Hill, NC 27599}

\email{mayukh@live.unc.edu}
\subjclass[2010]{35J61, 35H20}
\begin{abstract} 
We make a study of Weinstein functionals, first defined in ~\cite{W}, on the hyperbolic space $\HH^n$. We are primarily interested in the existence of Weinstein functional maximisers, or, in other words, existence of extremal functions for the best constant of the Gagliardo-Nirenberg inequality. The main result is that the maximum value of the Weinstein functional on $\HH^n$ is the same as that on $\RR^n$ and the related fact that the maximum value of the Weinstein functional is not attained on $\HH^n$, when maximisation is done in the Sobolev space $H^1(\HH^n)$. This proves a conjecture made in ~\cite{CMMT} and also answers questions raised in several other papers (see, for example, ~\cite{B}). We also prove that a corresponding version of the conjecture will hold for the Weinstein functional with the fractional Laplacian as well.
\end{abstract}
\maketitle

\section{\bf Introduction}
The Weinstein functional on a manifold $M$ for a function $u$ is defined by
\beq
W(u) = \frac{\V u\V^{p + 1}_{L^{p + 1}}}{\V u\V^\alpha_{L^2}\V \nabla u\V^\beta_{L^2}}
\eeq
with $\alpha = 2 - (n - 2)(p - 1)/2, \beta = n(p - 1)/2, n = \text{dim}(M)$. We also keep $p$ in the range $( 1, \frac{n + 2}{n - 2})$ unless otherwise mentioned. We are interested in whether $W(u)$ {\em attains} a maximum over $H^1(M)$. It is clear that if the Gagliardo-Nirenberg inequality
\beq \label{GNI}
\V u\V^{p + 1}_{L^{p + 1}} \leq C\V u\V^\alpha_{L^2}\V \nabla u\V^\beta_{L^2}
\eeq
holds, then $W(u)$ is bounded above, and moreover, the best constant in the Gagliardo-Nirenberg inequality will also be the supremum of the Weinstein functional over $H^1(M)$, denoted by $W_M^{\text{sup}}$. As a notational convenience, we will sometimes drop the subscript $M$ when there is no cause for confusion.\newline
The functional was first introduced in ~\cite{W} to study the bound states for nonlinear Schr{\"o}dinger equations. Now why is it important? Consider the nonlinear Schr{\"o}dinger equation 
\begin{align}\label{moloja}
iv_t + \Delta v & + |v|^{p - 1}v = 0, x \in M, \text{   }v(0, x) = v_0(x).
\end{align}
A nonlinear bound state/standing wave solution of (\ref{moloja}) is a choice of an initial condition $u_\lambda (x)$ such that 
\[
v(t, x) = e^{i\lambda t} u_\lambda(x).
\]
Plugging in this ansatz in (\ref{moloja}) yields the following auxiliary elliptic equation
\beq \label{aux}
-\Delta u_\lambda + \lambda u_\lambda - |u_\lambda|^{p - 1}u_\lambda = 0.
\eeq
We also note that seeking a standing wave solution to the nonlinear Klein-Gordon equation 
\beq\label{molochai}
v_{tt} - \Delta v + m^2 v - |v|^{p - 1}v = 0,\text{   } v(t, x) = e^{i\mu t}u(x)
\eeq
will lead to\footnote{From the point of view of standing waves, there is no essential difference in the analyses of the NLS and the NLKG.} (\ref{aux}) with $\lambda = m^2 - \mu^2$.\newline
Now, with $u, v \in H^1(M)$, we calculate that,
\beq\label{no}
\frac{d}{d\tau}W(u + \tau v)\bigg|_{\tau = 0} = \frac{\text{Re}(N(u), v)}{\V u\V^{2 \alpha}_{L^2}\V \nabla u\V^{2\beta}_{L^2}},
\eeq
where 
\[
N(u) = (p + 1)\V u\V^\alpha_{L^2}\V \nabla u\V^\beta_{L^2} |u|^{p - 1}u - \beta \V u\V^{p + 1}_{L^{p + 1}}\V u\V^\alpha_{L^2}\V \nabla u\V^{\beta - 2}_{L^2}(-\Delta u) - \alpha \V u\V^{\alpha - 2}_{L^2}\V \nabla u\V^\beta_{L^2}u\V u\V^{p + 1}_{L^{p + 1}}.
\]
Let $u$ be a maximiser of the Weinstein functional, and let 
\beq 
\lambda = \frac{\alpha}{\beta}\frac{\V \nabla u\V^2_{L^2}}{\V u\V^2_{L^2}}, \text{  } K = \frac{p + 1}{\beta}\frac{\V \nabla u\V^2_{L^2}}{\V u\V^{p + 1}_{L^{p + 1}}}.
\eeq
Then, (\ref{no}) shows that $u$ will give a solution to 
\beq \label{jalale}
-\Delta v + \lambda v = K|v|^{p - 1}v.
\eeq 
Now, if $u$ solves (\ref{jalale}), then $u_a = au$ solves 
\begin{equation}
-\Delta v + \lambda v = |a|^{1 - p}|v|^{p - 1}v,
\end{equation}
which finally means that we can solve (\ref{aux}) for any $K > 0$. This allows us to 
pass in between (\ref{jalale}) and (\ref{aux}). \newline
Theorem B of ~\cite{W} establishes the existence of a maximiser of the Weinstein functional inside $H^1(\RR^n)$. The main objective of Weinstein's work was to establish a sharp criterion for the existence of global solutions to the focusing nonlinear Schr{\"o}dinger equation on $\RR^{+} \times \RR^n$:
\beq\label{kevinnash}
iv_t + \Delta v = - \frac{1}{2}|v|^{p - 1}v, \text{  } v(x, 0) = v_0 (x),
\eeq
in the energy critical case $p = 1 + 4/n$. Before his work, (\ref{kevinnash}) was already known to have global solutions for any $v_0 \in H^1(\RR^n)$ with $\V v_0\V_{L^2}$ sufficiently small. The question was: exactly how small? This was answered in the energy critical case by
\begin{theorem} (Weinstein ~\cite{W}) Let $v_0 \in H^1(\RR^n)$. For $p = 1 + 4/n$, a sufficient condition for global existence in the initial-value problem (\ref{kevinnash}) is
\[
\V v_0\V_{L^2} < \V \psi \V_{L^2},
\]
where $\psi$ is a positive solution of the equation
\[
-\Delta u + u = u^{1 + 4/n}\]
of minimal $L^2$ norm.
\end{theorem}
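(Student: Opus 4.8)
The plan is to run Weinstein's conservation-law argument: produce a time-uniform a priori bound on the $H^1$ norm of the solution, and then invoke the blow-up alternative of the local theory. First I would record the two conserved quantities of the flow (\ref{kevinnash}): the mass $\V v(t)\V_{L^2}^2$ and the energy
\[
E(v(t)) = \tfrac12 \V \nabla v(t)\V_{L^2}^2 - \tfrac{1}{2(p+1)} \V v(t)\V_{L^{p+1}}^{p+1},
\]
both of which are checked by differentiating in $t$ and substituting the equation. Since $p = 1 + 4/n$ is energy-subcritical (indeed $1 + 4/n < \frac{n+2}{n-2}$), local well-posedness in $H^1(\RR^n)$ yields a maximal solution on an interval $[0, T^*)$ together with the blow-up alternative: either $T^* = \infty$, or $\V \nabla v(t)\V_{L^2} \to \infty$ as $t \uparrow T^*$. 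Hence it suffices to bound $\V \nabla v(t)\V_{L^2}$ uniformly on $[0, T^*)$.

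The decisive structural feature of the critical exponent is that $\alpha = 4/n$ and $\beta = 2$, so that the Gagliardo--Nirenberg inequality (\ref{GNI}) reads
\[
\V u\V_{L^{p+1}}^{p+1} \le W^{\sup}\, \V u\V_{L^2}^{4/n}\, \V \nabla u\V_{L^2}^2,
\]
with the sharp constant $W^{\sup}$ attained at the extremal $\psi$ guaranteed by Theorem B of ~\cite{W}. Crucially, $\V \nabla u\V_{L^2}$ enters this bound to exactly the same power ($2$) as it enters the energy. Substituting into $E(v(t))$ and factoring out the kinetic term gives
\[
E(v(t)) \ge \tfrac12 \V \nabla v(t)\V_{L^2}^2 \Big(1 - c\, \V v(t)\V_{L^2}^{4/n}\Big),
\]
where $c$ is an explicit multiple of $W^{\sup}$. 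To pin down $c$ in terms of $\psi$, I would use the two integral identities obtained by testing the ground-state equation $-\Delta \psi + \psi = \psi^{1+4/n}$ against $\psi$ and against $x \cdot \nabla \psi$; solving the resulting linear system expresses $\V \nabla \psi\V_{L^2}^2$ and $\V \psi\V_{L^{p+1}}^{p+1}$ as fixed multiples of $\V \psi\V_{L^2}^2$, and hence evaluates $W^{\sup}$ as an explicit constant times $\V \psi\V_{L^2}^{-4/n}$, normalised precisely so that the factor $1 - c\, \V u\V_{L^2}^{4/n}$ is positive exactly when $\V u\V_{L^2} < \V \psi\V_{L^2}$.

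With this in hand the conclusion is immediate. By conservation of mass, $\V v(t)\V_{L^2} = \V v_0\V_{L^2} < \V \psi\V_{L^2}$ for every $t \in [0, T^*)$, so the factor $\delta := 1 - c\, \V v(t)\V_{L^2}^{4/n}$ is a fixed positive constant independent of $t$. Combined with conservation of energy this yields
\[
\tfrac{\delta}{2} \V \nabla v(t)\V_{L^2}^2 \le E(v(t)) = E(v_0),
\]
a bound on $\V \nabla v(t)\V_{L^2}$ uniform in $t$, which together with mass conservation controls $\V v(t)\V_{H^1}$ uniformly. The blow-up alternative then forces $T^* = \infty$, giving global existence. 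The main obstacle, and the genuine content, is the sharp-constant step: one needs both the \emph{existence} of the extremal $\psi$ (so that $W^{\sup}$ is attained rather than merely approached) and its \emph{exact} evaluation through the Pohozaev identities, since it is the sharpness of the Gagliardo--Nirenberg constant that makes $\V \psi\V_{L^2}$ the sharp threshold — any non-optimal constant would only certify global existence below a strictly smaller mass.
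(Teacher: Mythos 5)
This theorem is not proved in the paper at all: it is quoted verbatim from Weinstein's original work \cite{W}, so there is no internal proof to compare against. Your proposal is correct and is precisely Weinstein's classical argument --- conservation of mass and energy, the sharp Gagliardo--Nirenberg constant evaluated at the ground state via the two Pohozaev-type identities (which, with $p = 1 + 4/n$, give $\V \nabla \psi\V_{L^2}^2 = \tfrac{n}{2}\V \psi\V_{L^2}^2$ and hence $W^{\sup} = \tfrac{n+2}{n}\V\psi\V_{L^2}^{-4/n}$), and the blow-up alternative of the $H^1$-subcritical local theory --- with the only caveat being that the exact numerical threshold depends on the normalization of the nonlinearity in (\ref{kevinnash}), a bookkeeping point your symbolic constant $c$ correctly leaves open.
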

Such solutions of minimal $L^2$ norm are also known as ground states.  Theorem B of ~\cite{W} shows that the Weinstein functional maximiser exists in $H^1(\RR^n)$ and also that it gives a ground state solution to (\ref{kevinnash}).\newline
In the setting of the hyperbolic space, consider the focusing nonlinear Schr{\"o}dinger equation
\beq\label{shawnmichaels}
iv_t + \Delta_{\HH^n} v = -|v|^{p - 1}v, \text{  } v(0, x) = v_0 \in H^1(\HH^n).
\eeq
We know that the Gagliardo-Nirenberg inequality holds on $\HH^n$ (see, for example, ~\cite{Ba}, Subsection 6.1). Let $C$ represent the best constant of the Gagliardo-Nirenberg inequality or $W_{\HH^n}^{\sup}$ in the energy critical case $p = 1 + 4/n$. Then, as stated in ~\cite{Ba}, (\ref{shawnmichaels}) has global solution if 
\[
\V v_0\V_{L^2} < (\frac{2 + 4/n}{2C})^{n/4}.
\]
Now, we can raise the following question: how do the best constants of the Gagliardo-Nirenberg inequality on $\RR^n$ and $\HH^n$ compare? It is known that the best constant in the Gagliardo-Nirenberg inequality on $\HH^n$ is greater than or equal to the one on $\RR^n$ (see Proposition \ref{2.2parina} below), but not obviously equal to it. This motivates us to investigate this natural question in Section \ref{hghg} below. In this regard, also refer to Section 4.3 of ~\cite{CMMT}.\newline
Applications to Schr{\"o}dinger equations apart, the Weinstein functional is an interesting nonlinear functional in its own right, and establishing where it can be maximised (that is, there exists a function which attains the maximum) is intrinsically related to the geometry of the manifold $M$ and can be quite tricky. The functional is not at all well-behaved with respect to  conformal changes of the metric, which adds to the difficulty.  To the best of our knowledge, the question of existence of Weinstein functional maximisers is largely unexplored in the compact setting, for example, on compact manifolds with boundary with Dirichlet boundary conditions.\newline
In the setting of non-compact Riemannian manifolds, it is not even clear when the Gagliardo-Nirenberg inequality (\ref{GNI}) holds, let alone existence of Weinstein functional maximisers. For the sake of completeness, we recall that the Gagliardo-Nirenberg inequality is implied by any of the following equivalent statements (we will prove a more general version of this implication later on):
\begin{itemize}
\item the heat kernel $p(t, x, y)$ of the manifold $M$ satisfying 
\beq \label{HKBWF}
p(t, x, y) \leq Ct^{-n/2}, t > 0, x, y \in M,
\eeq 
where $C$ is a constant independent of $x, y \text{  and   } t$.
\item Existence of Sobolev embeddings of the form 
\beq \label{SEWF}
\bigg(\int_M |u|^{2n/(n - 2)} dM\bigg)^{(n - 2)/n} \lesssim \int_M |\nabla u|^2 dM, \mbox{   }\forall \mbox{   } u \in C^\infty_0(M).
\eeq
\end{itemize}
\label{sym9} In fact, the above two statements are equivalent. For details on the proofs, see ~\cite{N} and ~\cite{V}. To be specific, ~\cite{N} establishes the heat kernel bounds starting from the Sobolev embeddings given by (\ref{SEWF}). ~\cite{V} has the converse. \newline
In particular, among other things, it is known that non-negative lower bounds on the Ricci curvature implies any of the above (actually the lower bound on the Ricci curvature is a much stronger condition; it can even imply Gaussian bounds on the heat kernel, see ~\cite{SY}). The heat kernel bounds (\ref{HKBWF}) are known separately for the hyperbolic space and many other nice rank 1 symmetric spaces (also see ~\cite{HS}). In any case, we know that $W_M^{sup}$ exists at least when $M = \RR^n, \HH^n$ as well as on compact manifolds with boundary with Dirichlet boundary conditions. We must also state the obvious at this point: the Weinstein functional maximisation problem does not make sense on a compact manifold without boundary, as the constants would make the $\V \nabla u\V^\beta_{L^2}$ term on the denominator vanish. One of the better ways to make sense of the problem on a compact manifold $M$ with boundary is to use Dirichlet boundary conditions; it disallows one from plugging in nonzero constant functions $u$ into $W(u)$. A Weinstein functional maximiser in $H^1_0(M)$ will give a solution to (\ref{aux}) with Dirichlet boundary conditions. \newline
Before we state our main theorems, let us begin with a few preliminary lemmas. The first thing we want to point out is the following
\begin{lemma}
Scaling the metric has no effect on the Weinstein functional. In other words, consider a manifold $(M, g)$ and the same (smooth) manifold with a scaled metric $(M, rg)$ ($r > 0$). Let $W(u)$ and $W_r(u)$ represent the Weinstein functionals of $u$ with respect to the metrics $g$ and $rg$ respectively. Then
\[
W_r(u) = W(u),
\]
which implies that
\[
W^{\sup}_{(M, g)} = W^{\sup}_{(M, rg)}.
\]
\end{lemma}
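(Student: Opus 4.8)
The plan is to track how each factor in $W(u)$ transforms when the metric is scaled by the constant $r > 0$, and then check that the resulting power of $r$ cancels completely. First I would record the three basic scaling rules. The Riemannian volume element scales as $dM_{rg} = r^{n/2}\, dM_g$, since the metric determinant picks up a factor $r^n$. The gradient-norm scales inversely: because $|\nabla u|^2$ involves the inverse metric, one has $|\nabla u|^2_{rg} = r^{-1}|\nabla u|^2_g$. The pointwise values of $u$ are of course unchanged.

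Next I would assemble the three norms appearing in $W$. From the volume rule, $\V u\V^{p+1}_{L^{p+1}}$ acquires a factor $r^{n/2}$, and $\V u\V^2_{L^2}$ acquires $r^{n/2}$, so that $\V u\V^\alpha_{L^2}$ contributes $r^{n\alpha/4}$. Combining the gradient rule with the volume rule gives $\int_M |\nabla u|^2_{rg}\, dM_{rg} = r^{n/2 - 1}\int_M |\nabla u|^2_g\, dM_g$, so that $\V \nabla u\V^\beta_{L^2}$ contributes $r^{(n-2)\beta/4}$. Hence
\[
\frac{W_r(u)}{W(u)} = r^{\,n/2 \,-\, n\alpha/4 \,-\, (n-2)\beta/4}.
\]

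The crux is then the purely algebraic claim that the exponent vanishes, i.e.\ that $2n = n\alpha + (n-2)\beta$. Substituting $\alpha = 2 - (n-2)(p-1)/2$ and $\beta = n(p-1)/2$, the terms $\pm n(n-2)(p-1)/2$ cancel and both sides equal $2n$; this is exactly the dimensional-analysis reason the exponents $\alpha,\beta$ were chosen as they are. It follows that $W_r(u) = W(u)$ for every fixed $u$. Finally, since scaling by a positive constant leaves the underlying function space $H^1(M)$ unchanged as a set (the scaled $H^1$-norm is merely equivalent to the original one), the admissible classes over which we maximise coincide, and taking the supremum over $u$ on both sides yields $W^{\sup}_{(M, g)} = W^{\sup}_{(M, rg)}$.

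I do not expect any genuine obstacle here: the entire content is the bookkeeping of scaling exponents, and the only step that must be verified with care is the identity $n\alpha + (n-2)\beta = 2n$, which is precisely what makes the functional scale-invariant.
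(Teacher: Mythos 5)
Your proposal is correct and follows essentially the same route as the paper: track the scaling of the volume element and the gradient, compute the power of $r$ picked up by each norm, and observe that the exponents cancel. You merely make explicit the algebraic identity $n\alpha + (n-2)\beta = 2n$ that the paper leaves implicit when combining its equations (\ref{900})--(\ref{902}).
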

\begin{proof}
Let $g_r = rg$ be the scaled metric and $\nabla_r$ denote the gradient 
of $u$ with respect to $g_r$. Then\label{sym22},
\begin{align}\label{900}
\int_M |u|^{p + 1}\sqrt{g_r}dx & = r^{n/2}\int_M |u|^{p + 1}\sqrt{g}dx,
\end{align}
\begin{align}\label{901}
(\int_M |u|^2\sqrt{g_r}dx)^{\alpha/2} & = r^{\alpha n/4}(\int_M |u|^{2}\sqrt{g}dx)^{\alpha/2},
\end{align}
Also, $|\nabla_r u|^2 = \frac{1}{r}|\nabla u|^2$, which means
\begin{align}\label{902}
\V \nabla_r u\V^\beta_{L^2} & = r^{\beta n/4 - \beta/2}\V\nabla u\V^\beta_{L^2}.
\end{align}
Finally, from (\ref{900}), (\ref{901}) and (\ref{902}), we have that $W_r(u) = W(u)$.
\end{proof}
So let us talk about one consequence of this lemma. Consider any manifold $M$ of dimension $n$. Then (also c.f. ~\cite{CMMT}, (4.3.18)), we have 
\begin{proposition}\label{2.2parina}
\beq
W^{\text{sup}}_M \geq W^{\text{sup}}_{\RR^n}.
\eeq
\end{proposition}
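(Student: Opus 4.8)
The plan is to exploit the fact that, at small scales and in geodesic normal coordinates, the manifold $M$ is indistinguishable from $\RR^n$, so that a near-optimal test function for $W^{\sup}_{\RR^n}$ can be transplanted onto $M$ with essentially no loss in the value of the Weinstein functional. Concretely, fix $\delta > 0$ and choose $u \in C^\infty_0(\RR^n)$ with $W_{\RR^n}(u) > W^{\sup}_{\RR^n} - \delta$; such a $u$ exists because $C^\infty_0$ is dense in $H^1(\RR^n)$ and $W$ is continuous there. The goal is then to produce, for this $u$, a function on $M$ whose functional value is arbitrarily close to $W_{\RR^n}(u)$.

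The first ingredient is the scale invariance already built into the functional. Since $\alpha + \beta = p + 1$, a direct computation in the same spirit as the previous Lemma shows that $W_{\RR^n}(u_\ep) = W_{\RR^n}(u)$ for the rescaling $u_\ep(x) = u(x/\ep)$. Thus, by choosing $\ep$ small, I may replace $u$ by $u_\ep$, supported in a Euclidean ball $B(0, \ep R)$ that is as small as I like, while its Weinstein value is unchanged. Now I fix a point $q \in M$ and a geodesic normal coordinate chart $\Phi = \exp_q$ on a ball $B_g(q, \rho)$, and for $\ep R < \rho$ I define the transplanted function $\tilde u_\ep := u_\ep \circ \Phi^{-1}$ on $M$, extended by zero; being compactly supported and smooth, it lies in $H^1(M)$.

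The second step is to compare $W_M(\tilde u_\ep)$ with $W_{\RR^n}(u_\ep)$. In normal coordinates one has $g_{ij}(x) = \delta_{ij} + O(|x|^2)$ and $\sqrt{\det g}(x) = 1 + O(|x|^2)$, so on the support $B(0, \ep R)$ the quantities $g_{ij}, g^{ij}, \sqrt{\det g}$ differ from their Euclidean counterparts by $O(\ep^2)$. Hence each of the three ingredients of the functional, namely $\V \tilde u_\ep\V_{L^{p + 1}}$, $\V \tilde u_\ep\V_{L^2}$ and $\V \nabla \tilde u_\ep\V_{L^2}$ computed with $g$, equals the corresponding Euclidean norm of $u_\ep$ up to a multiplicative factor $1 + O(\ep^2)$. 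Consequently $W_M(\tilde u_\ep) = W_{\RR^n}(u_\ep)\,(1 + O(\ep^2)) = W_{\RR^n}(u)\,(1 + O(\ep^2))$, and letting $\ep \to 0$ yields $W^{\sup}_M \geq W_M(\tilde u_\ep) \to W_{\RR^n}(u) > W^{\sup}_{\RR^n} - \delta$. Since $\delta > 0$ was arbitrary, the proposition follows.

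The main technical obstacle is the control of the gradient term $\V \nabla \tilde u_\ep\V_{L^2}$. Unlike the $L^2$ and $L^{p + 1}$ norms, which only see the volume distortion $\sqrt{\det g}$, the Dirichlet energy also feels the inverse metric through $|\nabla \tilde u_\ep|^2_g = g^{ij}\pa_i \tilde u_\ep \pa_j \tilde u_\ep$, so one must check that the two $O(\ep^2)$ perturbations, coming from $g^{ij}$ and from $\sqrt{\det g}$, combine to leave the quotient unaffected in the limit. Because both error terms are uniform on the shrinking support and $W$ is a scale-invariant ratio of these norms, the perturbations cancel to leading order and the limit is clean; making these estimates precise is the only place where genuine work is required.
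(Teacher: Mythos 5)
Your proof is correct and is essentially the paper's own argument: both rest on the scale invariance of the Weinstein functional combined with the fact that $M$ is Euclidean at small scales, the paper implementing this by scaling the metric on a small ball $U \subset M$ (via the preceding lemma) and asserting $W^{\sup}_{U_r} \to W^{\sup}_{\RR^n}$, while you equivalently shrink a near-optimal test function in $\RR^n$ by spatial scaling and transplant it through a normal coordinate chart. If anything, your $(1 + O(\ep^2))$ normal-coordinate estimates supply the quantitative justification that the paper leaves implicit in its step ``we see that $U_r$ approaches $\RR^n$''.
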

\begin{proof}
Start by selecting an open ball $U \subset M$ small enough so that it is diffeomorphic to the Euclidean 1-ball. When we scale the metric $g \mapsto g_r = rg$, as $r \to \infty$, let $U_r$ denote the dilated ball obtained from $U$. We see that $U_r$ approaches $\RR^n$ as $r \to \infty$. Then, using the scaling independence of $W(u)$, we have,
\[
W^{\text{sup}}_{\RR^n} = \lim W^{\text{sup}}_{U_r} = \lim W^{\text{sup}}_U = W^{\text{sup}}_U,
\]
where $W^{\text{sup}}_{U_r}$ is taken over all $u \in H^1_0(U_r)$.
Also, since $U \subset M$, 
\beq\label{960}
W^{\text{sup}}_M \geq W^{\text{sup}}_U.
\eeq
\end{proof}
We will describe in a later section how to construct compact manifolds with boundary $\overline{M}$ with the Dirichlet boundary condition for which we have
\[
W^{\text{sup}}_{\overline{M}} > W^{\text{sup}}_{\RR^n},
\]
which will demonstrate that equality does not always hold in (\ref{960}).

\section{\bf Comparing $W^{\text{sup}}_{\HH^n}$ with $W^{\text{sup}}_{\RR^n}$}\label{hghg}
Since the Gagliardo-Nirenberg inequality holds on $\HH^n$, $W^{\text{sup}}_{\HH^n}$ does exist, and as proven in Proposition \ref{2.2parina}, $W^{\text{sup}}_{\HH^n} \geq W^{\text{sup}}_{\RR^n}$. Now we investigate the question whether $W^{\text{sup}}_{\HH^n}$ is attained, or, in other words, whether there exists a Weinstein functional maximiser in $H^1(\HH^n)$. To attack this question, it seems convenient to use the following model of $\HH^n$:
\[
\HH^n = \{v = (v_0, v') \in \RR^{n + 1} : \langle v, v\rangle = 1, v_0 > 0\},
\]
and the metric on $\HH^n$ is given by the restriction of the Lorentzian metric on $\RR^{n + 1}$,
\[g = - d^2_{x_1} + d^2_{x_2} + ... + d^2_{x_{n + 1}}\]
to $\HH^n$. 
Let us parametrize $\HH^n$ using the following ``polar'' model:
\beq\label{polarmodel}
\HH^n = \{(t, x) \in \RR^{1 + n} : t = \text{cosh }r, x = \text{sinh }r\omega, r \geq 0, \omega \in S^{n - 1}\}.
\eeq
We note that the ``polar metric'' of $\HH^n$ is given by 
\beq\label{PMHS}
ds^2 = dr^2 + \text{sinh}^2 rd\omega^2,
\eeq
as compared to the corresponding ``polar'' metric on $\RR^n$, given by
\beq\label{PMES}
ds^2 = dr^2 + r^2d\omega^2.
\eeq
Comparing these two, we define the following map $T : L^2(\RR^n) \longrightarrow L^2(\HH^n)$ by 
\beq\label{94sesh}
T(u) = \phi u,
\eeq
where 
\beq\label{95sesh}
\phi(r) = (\frac{r}{\text{sinh }r})^{\frac{n - 1}{2}}.
\eeq
It is clear that $T$ is an isometry, since
\begin{align}
\int_{\HH^n} |\phi u|^2 d\HH^n & = \int_{r = 0}^\infty\int_{S^{n - 1}} |u|^2 (\frac{r}{\text{sinh }r})^{n - 1}\text{sinh}^{n - 1}rdrd\omega\nonumber\\
& = \int_{r = 0}^\infty\int_{S^{n - 1}} |u|^2r^{n - 1}drd\omega = \int_{\RR^n} |u|^2 d\RR^n.
\end{align}
Now we can state our first main theorem of this paper:
\begin{theorem}{\bf (Main Theorem I)}\label{WFMT}
\beq
W^{\text{sup}}_{\HH^n} = W^{\text{sup}}_{\RR^n}.
\eeq
\end{theorem}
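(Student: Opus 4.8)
The plan is to establish the reverse inequality $W^{\text{sup}}_{\HH^n} \le W^{\text{sup}}_{\RR^n}$, which together with Proposition \ref{2.2parina} forces equality. The entire argument rests on the $L^2$-isometry $T$ of (\ref{94sesh})--(\ref{95sesh}): given $v \in H^1(\HH^n)$, I would write $v = \phi u$ with $u = v/\phi \in H^1(\RR^n)$ and compare $W_{\HH^n}(v)$ with $W_{\RR^n}(u)$ factor by factor. Because $T$ preserves the $L^2$ norm, the $\V\cdot\V_{L^2}$ factors in the two denominators already agree, so only the $L^{p+1}$ numerator and the Dirichlet term need to be controlled. Before that I would reduce to radially symmetric $v$: by the P\'olya--Szeg\H{o} inequality on $\HH^n$, hyperbolic symmetrization preserves every $L^q$ norm of $v$ while not increasing $\V\nabla_{\HH^n} v\V_{L^2}$, hence does not decrease $W_{\HH^n}(v)$; thus it suffices to bound $W_{\HH^n}(v)$ for radial $v$.

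For the numerator, passing to the polar model (\ref{polarmodel})--(\ref{PMHS}) and using the identity $\phi^2\sinh^{n-1}r = r^{n-1}$ gives
\[
\int_{\HH^n} |v|^{p+1}\, d\HH^n = \int_0^\infty\!\!\int_{S^{n-1}} |u|^{p+1}\, r^{(n-1)(p+1)/2}\,\sinh^{(n-1)(1-p)/2}r\, dr\, d\omega.
\]
Since $p > 1$ the exponent $(n-1)(1-p)/2$ is negative, and $\sinh r \ge r$ forces $\sinh^{(n-1)(1-p)/2}r \le r^{(n-1)(1-p)/2}$, so the hyperbolic density is pointwise dominated by the Euclidean one and $\V v\V_{L^{p+1}(\HH^n)}^{p+1} \le \V u\V_{L^{p+1}(\RR^n)}^{p+1}$. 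This step is elementary and needs no radial assumption.

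The heart of the matter is the Dirichlet term, where I would prove $\V\nabla_{\HH^n}v\V_{L^2}^2 \ge \V\nabla u\V_{L^2}^2$ for radial $v = \phi u$. Expanding $|\nabla_{\HH^n}v|^2 = (\phi' u + \phi\,\partial_r u)^2$ and integrating against $\sinh^{n-1}r\, dr$, the pure $(\partial_r u)^2$ contribution equals the Euclidean radial energy $\int r^{n-1}(\partial_r u)^2$ exactly, again by $\phi^2\sinh^{n-1}r = r^{n-1}$. The cross term $2\phi\phi'\,u\,\partial_r u$ I would integrate by parts; after discarding boundary contributions this produces an effective potential and a direct computation yields
\[
\V\nabla_{\HH^n}v\V_{L^2}^2 - \V\nabla u\V_{L^2}^2 = \int_0^\infty\!\!\int_{S^{n-1}} r^{n-1}\Big(-\frac{\Delta_{\HH^n}\phi}{\phi}\Big)u^2\, dr\, d\omega.
\]
It then remains to check that the potential is nonnegative: with $m = (n-1)/2$ one computes $-\Delta_{\HH^n}\phi/\phi = (m^2-m)\big(1 + \sinh^{-2}r - r^{-2}\big) + m$, and since $\sinh r \ge r$ gives $1 + \sinh^{-2}r - r^{-2} \in (0,1)$, a one-line case analysis in $m$ shows this is strictly positive for every $n \ge 2$. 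This yields the Dirichlet comparison, hence $W_{\HH^n}(v) \le W_{\RR^n}(u) \le W^{\text{sup}}_{\RR^n}$ for radial $v$; taking the supremum and invoking the symmetrization reduction completes the proof.

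The main obstacle is precisely this gradient step, and two points demand care. First, the integration by parts leaves a boundary term $\big[\phi\phi'\sinh^{n-1}r\, u^2\big]_0^\infty$ whose outer endpoint does not vanish automatically (indeed $\phi\phi'\sinh^{n-1}r \sim -m\,r^{n-1}$ as $r\to\infty$), so I would first prove the inequality for radial $u \in C_0^\infty$ and then pass to general $v \in H^1$ by density; the endpoint at $r=0$ is harmless because $\sinh^{n-1}r \to 0$. Second, the angular part of the Dirichlet energy runs the wrong way, since its density difference is $r^{n-3}\big(r^2\sinh^{-2}r - 1\big) \le 0$; this is exactly why the reduction to radial functions is not a convenience but a necessity, for a general $v$ with large angular gradient could overwhelm the positive potential term. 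The argument must therefore be carried out only after symmetrization.
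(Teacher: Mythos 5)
Your proposal follows essentially the same route as the paper: the same $L^2$-isometry $T(u) = \phi u$, the same pointwise domination $(r/\sinh r)^{(n-1)(p-1)/2} < 1$ for the $L^{p+1}$ numerator, and the same reduction to radial functions by symmetric decreasing rearrangement on both spaces. Your Dirichlet-energy computation is just the quadratic-form version of the paper's operator identity $\phi^{-1}(-\Delta_{\HH^n})(\phi u) = -\Delta' u + \left[V_n(r) + \left(\frac{n-1}{2}\right)^2\right]u$: with $m = (n-1)/2$ your potential $(m^2 - m)\left(1 + \sinh^{-2}r - r^{-2}\right) + m$ equals the paper's $m(m-1)\left(\sinh^{-2}r - r^{-2}\right) + m^2$. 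Your explicit handling of the boundary term at $r = \infty$ (via compact support and density) and your observation that the angular part of the energy runs the wrong way, so that radiality is forced rather than convenient, are both correct; the paper disposes of the angular term the same way, by taking $u$ radial so that $\Delta_{S^{n-1}}u = 0$.

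There is, however, one genuine gap, and it sits exactly at the crux. You claim that $\sinh r \ge r$ gives $1 + \sinh^{-2}r - r^{-2} \in (0,1)$. The upper bound does follow from $\sinh r > r$, but the lower bound does not: $\sinh r \ge r$ says nothing about how large $r^{-2} - \sinh^{-2}r$ can get, and this quantity tends to $1/3$ as $r \to 0^{+}$, so the inequality you need, $r^{-2} - \sinh^{-2}r < 1$, is a genuine estimate, not a formal consequence of the comparison $\sinh r \ge r$. This is precisely where the paper invests its effort: it proves that $V(r) = \sinh^{-2}r - r^{-2}$ has no critical point on $(0,\infty)$ (via $h(r) = \sinh r/\cosh^{1/3}r$ and $h'(r) > 1$), hence $V > -1/3$ everywhere. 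Note also that your ``one-line case analysis in $m$'' really does need a quantitative lower bound on $V$: for $n \ge 4$, positivity of $(m^2-m)(1+V) + m$ requires $V > -m/(m-1)$, and $m/(m-1) \to 1$ as $n \to \infty$, so the bound $V > -1$ (or sharper) is indispensable and must be proved. The gap is fixable by elementary means --- for $r \ge 1$ the equivalent inequality $\sinh^2 r - r^2 \le r^2 \sinh^2 r$ is trivial, and for $0 < r < 1$ it follows from the series comparison
\begin{equation*}
\sinh^2 r \;=\; \sum_{k \ge 1} \frac{2^{2k-1}}{(2k)!}\, r^{2k} \;\le\; \sum_{k \ge 1} r^{2k} \;=\; \frac{r^2}{1 - r^2},
\end{equation*}
since $2^{2k-1}/(2k)! \le 1$ for all $k \ge 1$ --- but as written, the key positivity step is asserted with an invalid justification, and it is the only point where your argument falls short of a complete proof.
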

\begin{proof} The following is the scheme of our proof: we show that, given a function $v \in H^1(\HH^n)$, we can find a corresponding function $u \in H^1(\RR^n)$ such that 
\[
W_{\HH^n}(v) < W_{\RR^n}(u).
\]
So, if we can use a map that preserves the $L^2$ norm (we have the map $T$ as defined above in mind), that is, $\V u\V_{L^2(\HH^n)} = \V v\V_{L^2(\RR^n)}$, the major issue to address is how to compare their $L^{p + 1}$ and gradient-$L^2$ norms. That is, we are done if we can show that, with $\phi$ as in (\ref{94sesh}) and (\ref{95sesh}), 
\begin{itemize}
\item $\V \nabla(\phi u)\V_{L^2(\HH^n)} > \V \nabla u\V_{L^2(\RR^n)} $ and 
\item $\V \phi u\V_{L^{p + 1}(\HH^n)} < \V u\V_{L^{p + 1}(\RR^n)}$.
\end{itemize}
To that end, we quote the following calculation from ~\cite{CM}:
\[
\pa_r(\phi) = \frac{n - 1}{2} \bigg(\frac{r}{\text{sinh }r}\bigg)^{\frac{n - 3}{2}}\bigg(\frac{\text{sinh }r - r\text{cosh }r}{\text{sinh}^2(r)}\bigg)
\]
and 
\begin{align*}
\pa^2_r(\phi) & = \bigg(\frac{n - 1}{2}\bigg)\bigg(\frac{n - 3}{2}\bigg)\bigg(\frac{r}{\text{sinh }r}\bigg)^{\frac{n - 5}{2}}\bigg(\frac{\text{sinh }r - r\text{cosh }r}{\text{sinh}^2(r)}\bigg)^2 \\
& \text{   } + \frac{n - 1}{2} \bigg(\frac{r}{\text{sinh }r}\bigg)^{\frac{n - 3}{2}} \bigg(\frac{2r\text{sinh }r\text{cosh}^2(r) - 2\text{sinh}^2(r)\text{cosh }r - r\text{sinh}^3(r)}{\text{sinh}^4(r)}\bigg).
\end{align*}
Then we have
\beq\label{oboseshe}
\begin{aligned}
\phi^{-1}(-\Delta_{\HH^n})(\phi u) & = \phi^{-1}(-\pa^2_r - (n - 1)\frac{\text{cosh }r}{\text{sinh }r}\pa_r - \frac{1}{\text{sinh}^2(r)}\Delta_{S^{n - 1}})(\phi u)\\
& = -\pa^2_ru - 2\phi^{-1}(\pa_r\phi)(\pa_ru) - \phi^{-1}u\pa^2_r\phi \\
& \text{    }- (n - 1)\frac{\text{cosh }r}{\text{sinh }r}\pa_r u - (n - 1)\frac{\text{cosh }r}{\text{sinh }r}\phi^{-1}u\pa_r\phi - \frac{1}{\text{sinh}^2(r)}\Delta_{S^{n - 1}} u\\
& = -\pa^2_r u + V_0(r)\pa_r u + \bigg[V_n(r) + \bigg(\frac{n - 1}{2}\bigg)^2\bigg]u - \frac{1}{\text{sinh}^2(r)}\Delta_{S^{n - 1}} u\\
& = -\Delta' u + \bigg[V_n(r) + \bigg(\frac{n - 1}{2}\bigg)^2\bigg]u,
\end{aligned}
\eeq
where 
\beq\label{perechi}
\begin{aligned}
V_0(r) & = \frac{1 - n}{r}\\
V_n(r) & = \bigg(\frac{n - 1}{2}\bigg)\bigg(\frac{n - 3}{2}\bigg)\frac{1}{\text{sinh}^2 r} - \bigg(\frac{n -1}{2}\bigg)\bigg(\frac{n - 3}{2}\bigg)\frac{1}{r^2}\\& = \bigg(\frac{n -1}{2}\bigg)\bigg(\frac{n - 3}{2}\bigg)V(r)\\
-\Delta' & = -\Delta_{\RR^n} + \frac{\text{sinh}^2 r - r^2}{r^2\text{sinh}^2 r}\Delta _{S^{n - 1}},
\end{aligned}
\eeq
where $V(r) = \frac{1}{\text{sinh}^2 r} - \frac{1}{r^2}$.\newline
Now, start by selecting a radial function $u \in H^1(\RR^n)$. By the preceding calculation, using the fact that $\phi$ is an isometry and $-\Delta_{S^{n - 1}}u = 0$ (since $u$ is radial), we have
\beq\label{sa}
(-\Delta_{\HH^n} \phi u, \phi u) = (-\Delta_{\RR^n} u, u) + \epsilon \V u\V^2_{L^2}
\eeq
for some $\epsilon > 0$, because we have for all $r$ (see justification below), 
\beq \label{eq1}
(n - 3)(\frac{1}{r^2} - \frac{1}{\text{sinh}^2r}) < n - 1
\eeq
when $n \neq 2$ and 
\beq\label{eq2}
(n - 1)(n - 3)(\frac{1}{r^2} - \frac{1}{\text{sinh}^2r}) < 0 < (n - 1)^2
\eeq
when $n = 2$.\newline
Together (\ref{eq1}) and (\ref{eq2}) give us that for all $r > 0$, 
\[V_n (r) + \bigg(\frac{n - 1}{2}\bigg)^2 > 0,
\]
which in turn implies that $\epsilon > 0$. \newline
Let us justify (\ref{eq1}): this can be seen by observing that
\[
\lim_{r \to 0+}V(r) = -1/3
\]
and the fact that $V_n(r)$ does not attain an extremum for any $r > 0$. In fact $V'_n(r) = 0$ only when $r = 0$. This is because, we see that 
\begin{align*}
V'(r) = 0 & \Longrightarrow \frac{\text{sinh}^3r - r^3\text{cosh }r}{r^3\text{sinh}^3r} = 0\\
& \Longrightarrow \frac{\text{sinh}^3r}{\text{cosh }r} = r^3.
\end{align*}
If we let 
\[
h(r) = \frac{\text{sinh }r}{\text{cosh}^{1/3}r},
\]
then proving that $h'(r) > 1$ for all $r > 0$ will suffice, as then $h(r)$ can never equal $r$. Now,
\begin{align*}
h'(r) & = \frac{3\text{cosh}^2r - \text{sinh}^2r}{3\text{cosh}^{4/3}r} = \frac{2\text{cosh}^2r + 1}{3\text{cosh}^{4/3}r}.
\end{align*}
Now, writing $\text{cosh}^2r = z$, we have that
\begin{align*}
\frac{2\text{cosh}^2r + 1}{3\text{cosh}^{4/3}r} \leq 1 & \Longrightarrow 8z^3  - 15z^2 + 6z + 1 \leq 0\\
& \Longrightarrow (z -1)^2(8z + 1) \leq 0 \Longrightarrow z = 1,
\end{align*}
which can only happen if $r = 0$. So everywhere else, we have $h'(r) > 1$.\newline
When $r \to \infty$, $V(r) \to 0-$. Also, the fact that $V(r)$ does not attain an extremum means that $V(r) > -1/3 > -\frac{n - 1}{n - 3}$ always.  \newline
So, finally, from (\ref{sa}) we have that 
\beq \label{g}
\V\nabla (\phi u)\V^2_{L^2(\HH^n)} > \V \nabla u\V^2_{L^2(\RR^n)}.
\eeq
Also, 
when $p > 1$, we have
\begin{align*}
\int_{\HH^n} |\phi u|^{p + 1}d\HH^n & = \int_0^\infty\int_{S^{n - 1}} |u|^{p + 1}\frac{r^{(n - 1)(p + 1)/2}}{\text{sinh}^{(n - 1)(p + 1)/2} r}\text{sinh}^{n - 1}(r) dr d\omega\\
& = \int_0^\infty\int_{S^{n - 1}}|u|^{p + 1}\frac{r^{(n - 1)(p - 1)/2}}{\text{sinh}^{(n - 1)(p - 1)/2}r}r^{n - 1}drd\omega\\
& < \int_0^\infty\int_{S^{n - 1}} |u|^{p + 1}r^{n - 1}dr d\omega = \int_{\RR^n} |u|^{p + 1}d\RR^n.
\end{align*}
So, ultimately, we have, 
\beq\label{103}
W_{\HH^n}(\phi u) < W_{\RR^n}(u).
\eeq
However, it is known that 
\beq\label{rad1}
W^{\text{sup}}_{\HH^n} = \sup \{W_{\HH^n}(u) : u \text{    is a radial function   } \in H^1(\HH^n)\}.
\eeq
For details on this, see ~\cite{CM}\footnote{Also, by a radial function in this context, we mean a function whose value at a point depends solely on the distance of the point from a pre-chosen fixed point, which can be called the origin.}. Heuristically, the basic argument is that we start with an arbitrary function $u$ and then consider its symmetric decreasing rearrangement $u^*$, and make use of the fact that symmetric decreasing  rearrangements keep the same $L^s$-norms for all $s$, that is,
\[
\V u^*\V_{L^s(\HH^n)} = \V u\V_{L^s(\HH^n)}, \text{   }s \in [1, \infty],
\] but they decrease gradient norms, that is,
\beq\label{Seethis}
\V \nabla u^*\V_{L^s(\HH^n)} \leq \V \nabla u\V_{L^s(\HH^n)},\text{  } s \in [1, \infty].
\eeq
 To prove (\ref{Seethis}), ~\cite{CM} writes
\[
\V\nabla f\V_{L^2(\HH^n)} = \lim_{t \to 0}I^t(f),
\]
where 
\[
I^t(f) = t^{-1}[(f, f)_{\HH^n} - (f, e^{t\Delta_{\HH^n}}f)_{\HH^n}],
\]
$(.,.)_{\HH^n}$ denoting the usual inner product in $L^2(\HH^n)$.\newline
Since the symmetric decreasing rearrangement keeps same $L^2$-norm, now one just needs to see 
\beq\label{jnts}
(f^*, e^{t\Delta_{\HH^n}}f^*)_{\HH^n} \geq (f, e^{t\Delta_{\HH^n}}f)_{\HH^n}.
\eeq
Lemma 3.3 of ~\cite{CM} proves (\ref{jnts}) with the help of a rearrangement inequality from ~\cite{D} (which we reproduce below). \newline
Also, the statement for $\RR^n$ corresponding to (\ref{Seethis}) is given by:
\beq\label{Seethistoo}
\V\nabla u^*\V_{L^s(\RR^n)} \leq \V \nabla u\V_{L^s(\RR^n)}.
\eeq
For a proof of (\ref{Seethistoo}), see ~\cite{LL}.\newline
Finally, from what has gone,  
\[
W_{\HH^n}(u^*) \geq W_{\HH^n}(u) \text{  } \forall \text{  } u \in H^1(\HH^n),
\]
which establishes (\ref{rad1}). \newline
Lastly, we mention the fact that it does not matter where the radial functions are centered in the respective spaces, that is, if $\varphi$ is a radial function in $H^1(M)$ ($M = \HH^n$ or $\RR^n$), centered at $P \in M$, and $\psi$ is a translate of $\varphi$ centered at another point $Q \in M$, then $W_M(\varphi) = W_M(\psi)$. Towards that end, let $(1, \overline{0}) \in \HH^n$ be the point $t = 1, x = \overline{0} = (0,.., 0)$, as per the notation of (\ref{polarmodel}). Using the homogeneity of $\HH^n$, we can infer that 
\begin{align*}
\text{sup}\{W_{\HH^n}(u) : u \text{    is a radial function}\} &
 = \text{sup}\{W_{\HH^n}(u) : u \text{    is a radial function  centered at   } (1, \bar{0})\}.
\end{align*}
Also, using (\ref{Seethistoo}) and the homogeneity of $\RR^n$, we have,
\begin{align*}
W^{\text{sup}}_{\RR^n} & = \text{sup}\{W_{\RR^n}(u) : u \text{    is a radial function}\} \\
& = \text{sup}\{W_{\RR^n}(u) : u \text{    is a radial function  centered at } 0 \}.
\end{align*}
So, using (\ref{103}), and the conclusion of Proposition \ref{2.2parina}, we ultimately have our result.
\end{proof}
We include here the aforementioned rearrangement result from ~\cite{D}, as quoted in ~\cite{CM}.
\begin{theorem} \label{Drag}(\text{Draghici ~\cite{D}}) Let $X = \HH^n$, $f_i = X \to R_{+}$\label{sym555} be $m$ nonnegative functions, $\Psi \in AL_2(R^m_+)$ be continuous and $K_{ij} : [0, \infty) \to [0, \infty)$, $i < j$, $j \in \{1,...., m\}$ be decreasing functions. We define 
\[
I[f_1,...,f_m] = \int_{X^m} \Psi(f_1(\Omega_1),..., f_m(\Omega_m))\Pi_{i < j}K_{ij}(d(\Omega_i, \Omega_j))d\Omega_1...d\Omega_m.
\]
Then the following inequality holds:
\[
I[f_1,....,f_m] \leq I[f^*_1,...,f^*_m].\]

\end{theorem}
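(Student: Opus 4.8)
The plan is to establish this multifunction rearrangement inequality on $\HH^n$ by the method of two-point symmetrization (polarization): reduce first to indicator functions, then prove a single elementary reflection inequality, and finally propagate it to the full symmetric decreasing rearrangement by iterating polarizations and passing to the limit. Throughout, $f^*$ denotes the symmetric decreasing rearrangement about a fixed origin $o \in \HH^n$, in the sense already used in (\ref{Seethis}).

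First I would reduce to indicators by a layer-cake argument. The hypothesis $\Psi \in AL_2(\RR^m_+)$ is precisely the mixed-monotonicity (supermodularity) condition that lets one represent $\Psi(f_1(\Omega_1),\dots,f_m(\Omega_m))$, via its distribution-function decomposition, as a superposition with a \emph{nonnegative} measure $d\mu(t_1,\dots,t_m)$ of the multilinear indicators $\prod_{i} \chi_{\{f_i > t_i\}}(\Omega_i)$. Fubini then expresses $I[f_1,\dots,f_m]$ as a nonnegative average of the elementary functionals
\[
J[A_1,\dots,A_m] = \int_{X^m} \Big(\prod_{i} \chi_{A_i}(\Omega_i)\Big) \prod_{i<j} K_{ij}\big(d(\Omega_i,\Omega_j)\big)\, d\Omega_1\cdots d\Omega_m,
\]
taken over the superlevel sets $A_i = \{f_i > t_i\}$. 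Since $\{f_i^* > t\}$ is the geodesic ball about $o$ of the same measure as $A_i$, i.e.\ equals $A_i^*$, it suffices to prove $J[A_1,\dots,A_m] \le J[A_1^*,\dots,A_m^*]$ for all finite-measure $A_i$.

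The core step is the polarization inequality. Fix a totally geodesic hyperplane $H$ with $o$ in the closed half-space $H_+$, let $\sigma_H$ be the reflection across $H$, and let $A^H$ be the polarization of $A$ (the set obtained by moving, in each reflected pair $\{\Omega,\sigma_H\Omega\}$, the membership mass to the $H_+$ side). Because $\HH^n$ is two-point homogeneous, the mutual distance between two points can only shrink when both are moved into $H_+$, so each decreasing kernel $K_{ij}$ can only increase. Expanding $J[A_1,\dots,A_m]$ into the $2^m$ contributions indexed by which of the points $\Omega_1,\dots,\Omega_m$ are reflected across $H$ and rearranging these contributions, the monotonicity of the $K_{ij}$ yields $J[A_1^H,\dots,A_m^H] \ge J[A_1,\dots,A_m]$. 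I expect this to be the main obstacle: reflecting a single point is transparent (one decreasing kernel improves), but when several points are reflected simultaneously the cross terms $K_{ij}$ between two simultaneously reflected points must also be controlled, and it is here that the joint rearrangement of the $2^m$ reflected configurations --- underwritten by the same supermodular ($AL_2$) structure used in the reduction --- has to be organized correctly.

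Finally I would iterate and pass to the limit. On the two-point homogeneous space $\HH^n$ one can select a sequence of polarizers $H_k$, chosen relative to the fixed origin $o$, so that the iterated polarizations converge in $L^1$, namely $\chi_{A_i^{H_1\cdots H_k}} \to \chi_{A_i^*}$; this is the hyperbolic analogue of the Brock--Solynin and Baernstein--Taylor approximation theorems, and verifying that the limit is the genuine symmetric decreasing rearrangement rather than some other symmetric competitor is the secondary technical point, requiring the homogeneity of $\HH^n$ and a careful choice of the $H_k$. Since $J$ is continuous under $L^1$ convergence of the indicators (after a standard truncation of the kernels and monotone convergence), the chain $J[A_1,\dots,A_m] \le J[A_1^{H_1},\dots] \le \cdots$ passes to the limit to give $J[A_1,\dots,A_m] \le J[A_1^*,\dots,A_m^*]$. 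Integrating back against $d\mu$ then recovers $I[f_1,\dots,f_m] \le I[f_1^*,\dots,f_m^*]$, as desired.
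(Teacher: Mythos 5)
A point of order first: the paper contains no proof of Theorem \ref{Drag} --- it is imported verbatim from Draghici \cite{D} (as quoted in \cite{CM}) --- so your proposal can only be compared with Draghici's published argument. Your overall architecture (polarization across totally geodesic hypersurfaces, then convergence of iterated polarizations to the symmetric decreasing rearrangement in the Baernstein--Taylor/Brock--Solynin style) is indeed the method used there. But your first step has a genuine gap: the layer-cake reduction to products of indicators with a \emph{nonnegative} representing measure is not available under the stated hypothesis. Membership in $AL_2(R^m_+)$ asks only that the \emph{pairwise} mixed second differences of $\Psi$ be nonnegative, whereas a representation of the form $\Psi \sim \int \prod_{i}\chi_{\{s_i > t_i\}}\,d\mu(t_1,\dots,t_m)$ with $\mu \geq 0$ (plus lower-order face terms) forces the order-$m$ mixed difference $\Delta_{1\cdots m}\Psi$ to be nonnegative, a strictly stronger condition once $m \geq 3$. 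Concretely, with $\sigma(t) = \min(t,1)$ and $0 < \ep \leq 1$, the continuous function $\Psi(s_1,s_2,s_3) = \sigma(s_1)\sigma(s_2) + \sigma(s_2)\sigma(s_3) + \sigma(s_3)\sigma(s_1) - \ep\,\sigma(s_1)\sigma(s_2)\sigma(s_3)$ is in $AL_2$ (each pairwise second difference equals $\Delta\sigma\cdot\Delta\sigma\cdot(1 - \ep\,\sigma) \geq 0$), yet its third mixed difference equals $-\ep\,\Delta\sigma\cdot\Delta\sigma\cdot\Delta\sigma < 0$ on $[0,1]^3$, so no nonnegative $\mu$ exists. (Even when the order-$m$ difference is nonnegative, the decomposition produces face terms $\Psi$ restricted to coordinate hyperplanes, which are integrated against the same kernels and need separate treatment.) This is exactly why Draghici does \emph{not} reduce to indicators: the polarization inequality is proved directly for the integrand $\Psi(f_1(\Omega_1),\dots,f_m(\Omega_m))\prod_{i<j}K_{ij}$, with the $AL_2$ condition entering in the elementary two-point inequality over the $2^m$ reflected configurations, where pairwise supermodularity combined with $d(\Omega_i,\Omega_j) = d(\sigma_H\Omega_i,\sigma_H\Omega_j) \leq d(\Omega_i,\sigma_H\Omega_j)$ is precisely what is needed.

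Your second step also asserts rather than proves the crux. Even for indicator functions, the $2^m$-configuration bookkeeping is a genuine combinatorial lemma (it is the content of the Burchard--Schmuckensch\"ager/Morpurgo-type polarization results), not a consequence of ``each decreasing kernel can only improve'': when \emph{both} endpoints of a pair are reflected, $K_{ij}$ is unchanged rather than improved, and your appeal to ``the same supermodular structure used in the reduction'' is circular, since after a reduction to indicators the integrand is a pure product and the $AL_2$ hypothesis has been consumed. The final step is sound --- convergence of a suitable sequence of iterated polarizations to $f^*$ does hold on $\HH^n$, and $J$ passes to the limit after truncating the kernels --- and it is worth noting that for the sole use this paper makes of the theorem, namely $m = 2$ with $\Psi(f_1,f_2) = f_1 f_2$ and $K_{12} = p_\alpha$, your reduction is legitimate (for $m = 2$, $AL_2$ does yield a nonnegative measure, and the face terms vanish for a product). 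So your outline would suffice for the application in the text, but as written it does not prove Theorem \ref{Drag} in the stated generality; fixing it means replacing the indicator reduction by a direct two-point (polarization) inequality for $AL_2$ integrands, as in \cite{D}.
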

Theorem \ref{WFMT} was conjectured in ~\cite{CMMT} (also see ~\cite{Ba}). Harris (~\cite{Ha}) had collected some numerical evidence of this phenomenon in the special case $p = n = 2$.\newline
Note that we have also proved another related conjecture in ~\cite{CMMT}, which says in effect that for all $u \in H^1(\HH^n)$, $W(u) < W^{\text{sup}}_{\HH^ n}$, which means that there is no Weinstein functional maximiser in $H^1(\HH^n)$. Let us justify this: in case there exists $v \in H^1(\HH^n)$ such that $W_{\HH^n}(v) = W^{\text{sup}}_{\HH^n}$, then the spherical decreasing rearrangement $v^* \in H^1(\HH^n)$ of $|v|$ also satisfies $W_{\HH^n}(v^*) = W^{\text{sup}}_{\HH^n}$. But then, $u^* = \phi^{-1}v^* \in H^1(\RR^n)$ will satisfy $W_{\RR^n}(u^*) > W_{\HH^n}(v^*)$. By Theorem \ref{WFMT}, this is a contradiction.\newline
Let us comment on the implications of this. Suppose we are trying to maximise $W(u)$ on $H^1(\RR^n)$. We can use the fact that $W(u)$ is invariant under the transformation $u \mapsto au$ and spatial scaling $u(x) \mapsto u(bx)$. Now, given a sequence $u_\nu$ such that $W(u_\nu) \longrightarrow W^{max}_{\RR^n}$ the above mentioned facts allow us to normalise 
\[
\V u_\nu\V_{L^2} = 1, \V \nabla u_\nu\V_{L^2} = 1
\]
and pass to a subsequence, still called $u_\nu$ to find $u$ such that $u_\nu \longrightarrow u$ weak$^*$ in $H^1(\RR^n)$ and prove that $u$ actually maximises $W(u)$ in $H^1(\RR^n)$.\newline
When we try to repeat this argument on $\HH^n$, we can only achieve the normalisation (because we don't  have spatial scaling any more)
\[
\V\nabla u_\nu\V_{L^2} = 1
\]
which implies that $\V u_\nu\V_{L^2}$ and $\V u\V_{L^p}$ are bounded. Now, take a subsequence, still called $u_\nu$ such that $u_\nu \longrightarrow u$ weak$^*$ in $H^1(\HH^n)$. Now, the implication is, if after passing to a further subsequence, we have
$
\V u_\nu\V_{L^2} \longrightarrow A
$, then we must have
\beq
A = 0
\eeq
Assume that $A \neq 0$, if possible. In that case, we have
\beq
\V u_\nu\V^{p + 1}_{L^{p + 1}} \longrightarrow A^\alpha W^{max}_{\HH^n}
\eeq
and $u_\nu \longrightarrow u$ in $L^{p + 1}$-norm, so $\V u\V^{p + 1}_{L^{p + 1}} = A^\alpha W^{max}_{\HH^n}$. But, $\V u\V_{L^2} \leq A$ and $\V\nabla u\V_{L^2} \leq 1$, so
\beq
W(u) \geq \frac{A^\alpha W^{max}_{\HH^n}}{A^\alpha} = W^{max}_{\HH^n}
\eeq
which would give us a minimiser of the Weinstein functional on $H^1(\HH^n)$, which, as we proved, is not possible. 
\begin{remark}
For a generic manifold $M$, we do not have $W^{\sup}_M = W^{\sup}_{\RR^n}$. In fact, consider the following counterexample: \newline
Let $M_k$ be the sphere $S^n$ with a tiny open ball (homeomorphic to \label{sym20} $B_1(0) \subset \RR^n$) of radius $r_k$ removed. As we make the radius of the removed ball $r_k \to 0$, we see that the first eigenvalue $\lambda^{(k)}_1$ of the Laplacian $-\Delta_k$ of $M_k$ goes to $0$, because $M_k$ approaches the sphere $S^n$, whose first eigenvalue is 0. Now, consider a sequence of functions $u^k_l$ such that when $k$ is fixed, $W_{M_k}(u^k_l) \to W^{\text{sup}}_{M_k}$.  Since all the $M_k$'s are compact with uniformly bounded volume, we can find a constant $C$ (independent of $k$) such that $\V u^k_l\V_{L^2} \leq C\V u^k_l\V_{L^{p + 1}}$. Now,
\begin{align*}
\frac{\V u^k_l\V^{p + 1}_{L^{p + 1}(M_k)}}{\V u^k_l\V^\alpha_{L^2(M_k)}\V \nabla u^k_l\V^\beta_{L^2(M_k)}} = \frac{\V u^k_l\V^{p + 1}_{L^{p + 1}(M_k)}\V u^k_l\V^\beta_{L^2(M_k)}}{\V u^k_l\V^{p + 1}_{L^2(M_k)}\V \nabla u^k_l\V^\beta_{L^2(M_k)}} & \geq \frac{\V u^k_l\V^\beta_{L^2(M_k)}}{C^{p + 1}\V \nabla u^k_l\V^\beta_{L^2(M_k)}}, \text{   } .
\end{align*}
So, 
\[
\sup \frac{\V u^k_l\V^{p + 1}_{L^{p + 1}(M_k)}}{\V u^k_l\V^\alpha_{L^2(M_k)}\V \nabla u^k_l\V^\beta_{L^2(M_k)}} \geq \frac{1}{C^{p + 1} {(\lambda^{(k)}_1)}^\beta}. \]
This means that we have $W^{\text{sup}}_{M_k} \to \infty$. \newline
On a compact domain inside $\RR^n$ with Dirichlet boundary condition, it is known via a Harnack inequality argument (see Proposition 4.3.1 of ~\cite{CMMT}) that there is no optimal constant for the Gagliardo-Nirenberg inequality. It is however, an interesting (and largely unanswered) question as to what happens in the case of generic compact manifolds with boundary (with Dirichlet boundary condition). 
\end{remark}
\section{\bf Weinstein functional and fractional Laplacian}
We know that Spec$(-\Delta_{\HH^n}) \subset [\frac{(n - 1)^2}{4}, \infty)$. So the spectral theorem can be applied to define the fractional Laplacian $(-\Delta)^\alpha$, $\alpha \in (0, 1)$. Now we investigate the corresponding Weinstein functional maximisation problem for the fractional Laplacian $(-\Delta)^\alpha$. In other words, we try to investigate what we can say about the maximisation problem for 
\[
W_\alpha(u) = \frac{\V u\V^{p + 1}_{L^{p + 1}}}{\V u\V^\gamma_{L^2}\V (-\Delta)^{\frac{\alpha}{2}}u\V^\rho_{L^2}},
\]
where $\gamma = 2 - (n - 2\alpha)(p - 1)/(2\alpha), \rho = n(p - 1)/(2\alpha)$. We will want $p \in (1, \frac{n + 2\alpha}{n - 2\alpha})$.
The reason for our interest in this is the following: if we consider the fractional NLS of the form 
\begin{align*}
iv_t - (-\Delta)^\alpha v & + |v|^{p - 1}v = 0, x \in M\\
v(0, x) & = v_0(x),
\end{align*}
and plug in  
\[
v(t, x) = e^{i\lambda t} u_\lambda(x),
\]
we get the the following auxiliary elliptic equation
\beq \label{auxf}
(-\Delta)^\alpha u_\lambda + \lambda u_\lambda - |u_\lambda|^{p - 1}u_\lambda = 0.
\eeq
By a similar calculation as before, a maximiser $u$ for the fractional Weinstein functional will solve 
\beq
(-\Delta)^\alpha v + \lambda v = K|v|^{p - 1}v,
\eeq
where
\beq
\lambda = \frac{\gamma}{\rho}\frac{\V (-\Delta)^{\alpha/2}u\V^2_{L^2}}{\V u\V^2_{L^2}}, \text{  } K = \frac{p + 1}{\rho}\frac{\V (-\Delta)^{\alpha/2}u\V^2_{L^2}}{\V u\V^{p + 1}_{L^{p + 1}}}.
\eeq
Also, let us mention here that there has been some recent interest in nonlocal equations of the type (\ref{auxf}). For example, see ~\cite{FL} and references therein.\newline 
Now, the fractional Gagliardo-Nirenberg inequality (the fact that it actually holds is the content of Proposition \ref{2.4} below) implies that $W_\alpha(u)$ is actually bounded from above on both $\RR^n$ and $\HH^n$, when $u$ is chosen from the natural domain of $(-\Delta)^{\alpha/2}$, which is
\[
\mathcal{D}((-\Delta)^{\frac{\alpha}{2}}) = H^\alpha (M) \subset L^q(M), \text{   }\forall q \text{   }\in \bigg[2, \frac{2n}{n - 2\alpha}\bigg], M = \RR^n, \HH^n.
\]
Let us discuss when the fractional Gagliardo-Nirenberg inequality holds. We want to justify (our tacit claim above) that it holds on the hyperbolic space $\HH^n$ and the Euclidean space $\RR^n$. Actually we have, more generally:
\begin{proposition}\label{2.4}
Let $M$ be a complete Riemannian manifold on which the heat kernel satisfies the following pointwise bounds:
\beq\label{ptwise}
|p(t, x, y)| \leq Ct^{-n/2}, \text{   }t > 0,\text{    } x, y \in M,
\eeq
where $C$ is constant independent of $t, x$ and $y$. Then the fractional Gagliardo-Nirenberg inequality 
\[
\V u\V^{p + 1}_{L^{p + 1}} \leq C\V (-\Delta)^{\alpha/2}u\V_{L^2}^\rho\V u\V^\gamma_{L^2}
\]
holds on M, where $\gamma = 2 - (n - 2\alpha)(p - 1)/(2\alpha)$, and $ \rho = n(p - 1)/(2\alpha)$.
\end{proposition}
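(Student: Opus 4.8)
The plan is to reduce the fractional Gagliardo--Nirenberg inequality to a single fractional Sobolev embedding,
\[
\V u\V_{L^{2n/(n - 2\alpha)}} \lesssim \V (-\Delta)^{\alpha/2} u\V_{L^2},
\]
and then recover the full inequality by interpolating the $L^{p+1}$ norm between $L^2$ and the Sobolev exponent $q := 2n/(n - 2\alpha)$. Writing $L = -\Delta \geq 0$ so that the heat semigroup is $e^{-tL} = e^{t\Delta}$ with kernel $p(t,x,y)$, I would first upgrade the hypothesis (\ref{ptwise}) to an $L^2 \to L^\infty$ smoothing bound for the semigroup. Since $p$ is symmetric and satisfies the Chapman--Kolmogorov identity, one has $\int_M p(t,x,y)^2\,dy = p(2t,x,x) \leq C(2t)^{-n/2}$, and Cauchy--Schwarz then gives $\V e^{-tL} f\V_{L^\infty} \leq C t^{-n/4}\V f\V_{L^2}$.

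The heart of the argument is the Sobolev embedding, which I would obtain through the subordination formula
\[
(-\Delta)^{-\alpha/2} = \frac{1}{\Gamma(\alpha/2)}\int_0^\infty t^{\alpha/2 - 1}\, e^{-tL}\,dt.
\]
Given $g \in L^2(M)$ with $\V g\V_{L^2} = 1$, I would split this integral at a parameter $R > 0$. The contraction property $\V e^{-tL} g\V_{L^2} \leq 1$ bounds the small-time part in $L^2$ by $C R^{\alpha/2}$, while the smoothing estimate above controls the large-time part in $L^\infty$ by $C R^{\alpha/2 - n/4}$, the exponent being negative precisely because $n > 2\alpha$ in the admissible range. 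Optimising $R$ against the level $\lambda$ then yields the weak-type estimate $|\{\,|(-\Delta)^{-\alpha/2} g| > \lambda\,\}| \lesssim \lambda^{-q}$, that is, $(-\Delta)^{-\alpha/2} : L^2 \to L^{q,\infty}$. Running the same splitting with $L^r$ data for $r$ in a neighbourhood of $2$ produces weak-type bounds at two distinct exponents, and Marcinkiewicz interpolation upgrades these to the strong-type bound $(-\Delta)^{-\alpha/2} : L^2 \to L^q$, which is exactly the desired embedding after substituting $u = (-\Delta)^{-\alpha/2} g$.

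With the embedding in hand, the remainder is routine interpolation. Choosing $\theta \in (0,1)$ by $\tfrac{1}{p+1} = \tfrac{1-\theta}{2} + \tfrac{\theta}{q}$ gives $\V u\V_{L^{p+1}} \leq \V u\V_{L^2}^{1-\theta}\V u\V_{L^q}^{\theta}$, and applying the Sobolev embedding to the second factor and raising to the power $p+1$ yields
\[
\V u\V_{L^{p+1}}^{p+1} \lesssim \V u\V_{L^2}^{(1-\theta)(p+1)}\, \V (-\Delta)^{\alpha/2} u\V_{L^2}^{\theta(p+1)}.
\]
A direct computation gives $\theta = \tfrac{n(p-1)}{2\alpha(p+1)}$, whence $\theta(p+1) = \rho$ and $(1-\theta)(p+1) = \gamma$, the consistency check being $\gamma + \rho = p+1$. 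The admissible range $p \in (1, \tfrac{n+2\alpha}{n-2\alpha})$ is exactly what forces $p+1 \in (2, q)$, hence $\theta \in (0,1)$, so the interpolation is legitimate.

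The main obstacle is the passage from the pointwise heat kernel bound to the \emph{strong}-type Sobolev embedding: the weak-type estimate falls out cleanly from the subordination splitting, but turning it into a genuine $L^2 \to L^q$ bound requires interpolation between two endpoints (or, equivalently, invoking the known equivalence between ultracontractivity and Sobolev inequalities established in \cite{N} and \cite{V}). By contrast, the final Hölder/interpolation step and the bookkeeping of the exponents $\gamma$ and $\rho$ are purely computational.
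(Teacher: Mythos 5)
Your proof is correct, and the exponent bookkeeping checks out: with $q = 2n/(n-2\alpha)$ and $\tfrac{1}{p+1} = \tfrac{1-\theta}{2} + \tfrac{\theta}{q}$ one indeed gets $\theta = \tfrac{n(p-1)}{2\alpha(p+1)}$, hence $\theta(p+1) = \rho$, $(1-\theta)(p+1) = \gamma$, and $\gamma + \rho = p+1$, with $\theta \in (0,1)$ exactly on the stated range of $p$. Your skeleton coincides with the paper's: interpolate the $L^{p+1}$ norm and control the strong factor by a Hardy--Littlewood--Sobolev embedding derived from the ultracontractivity hypothesis (\ref{ptwise}). Where you differ is in how that embedding is handled: the paper's proof of Proposition \ref{2.4} consists of the H\"older step (kept at general exponents $m, s$ and then specialized to $m = s = 2$) plus a citation of the HLS estimate to ~\cite{VSC}, Chapter II, Theorem II.2.4, whereas you prove the embedding from scratch --- the $L^2 \to L^\infty$ smoothing via symmetry and Chapman--Kolmogorov, the subordination splitting at $R$, optimization of $R$ against $\lambda$ to get the weak-type $(2,q)$ bound, and Marcinkiewicz interpolation at two exponents straddling $2$. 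Amusingly, this is precisely the ``more usual Marcinkiewicz interpolation technique'' that the paper's Appendix notes Varopoulos' argument was designed to bypass: there the analogous estimate on rank-1 symmetric spaces is closed, after the same subordination splitting, by the Stein maximal ergodic theorem applied to $u^*(x) = \sup_{t \geq 0}|e^{t\Delta}u(x)|$ instead of weak-type bounds. Your route buys self-containedness (nothing is cited beyond Marcinkiewicz) at the cost of running the splitting at a family of exponents; the maximal-theorem route avoids interpolation machinery altogether, and the paper's citation route is of course shortest. One input you use tacitly in the Marcinkiewicz step deserves a word: to get $L^r \to L^\infty$ smoothing for $r \neq 2$ from (\ref{ptwise}) you need the sub-Markov property $\int_M p(t,x,y)\,dy \leq 1$ (equivalently, $L^\infty$-contractivity of $e^{t\Delta}$) so as to interpolate the kernel bound against the $L^\infty$ contraction; this is automatic for the heat semigroup on a complete Riemannian manifold, but it is an ingredient beyond the bare pointwise bound and should be stated.
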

\begin{proof}
We have,
\begin{align*}
\int_M |u|^{p + 1}dM &  = \int_M|u|^{(p + 1)\theta}|u|^{(p + 1)(1 - \theta)}dM\\
& \leq \V |u|^{(p + 1)\theta}\V_{L^{r'}} \V |u|^{(p + 1)(1 - \theta)}\V_{L^{s'}}\\
& = \V u\V^{(p + 1)\theta}_{L^{r'(p + 1)\theta}} \V u\V^{(p + 1)(1 -\theta)}_{L^{s'(p + 1)(1 - \theta)}}
\end{align*}
where $\frac{1}{r'} + \frac{1}{s'} = 1$.\newline
That means, 
\[
\V u\V_{L^{p + 1}} \leq \V u\V^\theta_{L^{r'(p + 1)\theta}}\V u\V^{1 - \theta}_{L^{s'(p + 1)(1 - \theta)}}.
\]
Let $r'(p + 1)\theta = r$ and $s'(p + 1)(1 - \theta) = s$. So
\[
\V u\V_{L^{p + 1}} \leq \V u\V^\theta_{L^{r}}\V u\V^{1 - \theta}_{L^{s}},
\]
where 
\[
\frac{\theta}{r} + \frac{1 - \theta}{s} = \frac{1}{p + 1}.
\]
Now, 
we can assert that the Hardy-Littlewood-Sobolev estimates 
\[
\V u\V_{L^r} \lesssim \V (-\Delta)^{\alpha/2} u\V_{L^m}
\]
where $r = \frac{nm}{n - \alpha m}$, $0 < \alpha < 1$, $1 < m < \frac{n}{\alpha}$, will follow from the heat kernel bounds (see ~\cite{VSC}, Chapter II, Theorem II.2.4 and the following discussion; also see ~\cite{Bau}). Given that, we now have
\[
\V u\V^{p + 1}_{L^{p + 1}} \lesssim \V (-\Delta)^{\alpha/2} u\V^{\theta(p + 1)}_{L^m}\V u\V^{(1 - \theta)(p + 1)}_{L^s}
\]
with 
\[
\theta(\frac{1}{m} - \frac{\alpha}{n}) + \frac{1 - \theta}{s} = \frac{1}{p + 1}.
\]
In the special case of $m = s = 2$, we retrieve the Gagliardo-Nirenberg inequality in the form that we use here.
\end{proof}
\begin{remark} By ~\cite{DGM}, it is known that the heat kernel bounds (\ref{ptwise}) hold on complete simply connected manifolds of dimension $n$ and sectional curvature less than or equal to 0. This is also true on compact manifolds with the Dirichlet Laplacian. As regards symmetric spaces, a similar heat kernel bound holds on spaces of the form $G_{\mathbb{C}}/G$, where $G$ is a compact Lie group and $G_{\mathbb{C}}$ is the complexification of $G$ (for details, see ~\cite{Ga}).
\end{remark}
 Now we have the second main theorem of this paper:
\begin{theorem}(\text{\bf Main Theorem II})\label{WFMT2}
\[
W_{\alpha, \RR^n}^{sup} = W_{\alpha, \HH^n}^{sup}.
\]
\end{theorem}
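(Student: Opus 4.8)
The plan is to follow the architecture of the proof of Theorem \ref{WFMT} verbatim in its $L^2$ and $L^{p+1}$ parts, replacing only the single comparison of Dirichlet energies by a comparison of the fractional seminorms $\V(-\Delta)^{\alpha/2}\,\cdot\,\V_{L^2}$. Since the map $T(u)=\phi u$ of (\ref{94sesh})--(\ref{95sesh}) is an $L^2$-isometry and the computation giving $\V\phi u\V_{L^{p+1}(\HH^n)}<\V u\V_{L^{p+1}(\RR^n)}$ does not involve the Laplacian at all, both of these carry over unchanged. The reduction to radial functions also survives: writing, for $\alpha\in(0,1)$ and a positive constant $c_\alpha$,
\[
\V(-\Delta)^{\alpha/2}f\V_{L^2}^2 = c_\alpha\int_0^\infty\big[(f,f)-(f,e^{t\Delta}f)\big]\,\frac{dt}{t^{1+\alpha}},
\]
the symmetric decreasing rearrangement preserves $(f,f)$ and, by (\ref{jnts}) (itself a consequence of Theorem \ref{Drag}), can only increase $(f,e^{t\Delta}f)$; hence it decreases each integrand and therefore the whole seminorm, on both $\RR^n$ and $\HH^n$. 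This fractional P\'olya--Szeg\H{o} inequality gives $W^{\sup}_{\alpha,\HH^n}=\sup\{W_{\alpha,\HH^n}(v):v\text{ radial}\}$ exactly as in (\ref{rad1}). Thus everything reduces to showing, for radial $u\in H^\alpha(\RR^n)$, that $\V(-\Delta_{\HH^n})^{\alpha/2}(\phi u)\V_{L^2(\HH^n)}\ge\V(-\Delta_{\RR^n})^{\alpha/2}u\V_{L^2(\RR^n)}$.

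For this I would pass from the pointwise PDE identity of Theorem \ref{WFMT} to the level of operators. The computation (\ref{oboseshe})--(\ref{perechi}) says precisely that $A:=\phi^{-1}(-\Delta_{\HH^n})\phi$ acts on radial functions as $-\Delta_{\RR^n}+W(r)$, where $W(r)=V_n(r)+\big(\tfrac{n-1}{2}\big)^2$ is a bounded, strictly positive multiplication operator (positivity and boundedness being exactly what (\ref{eq1})--(\ref{eq2}) establish). Because $T$ is a unitary equivalence carrying radial functions to radial functions, $(-\Delta_{\HH^n})^\alpha$ is unitarily equivalent to $A^\alpha$ and this equivalence respects the reducing subspace of radial functions; consequently
\[
\V(-\Delta_{\HH^n})^{\alpha/2}(\phi u)\V_{L^2(\HH^n)}^2=\big((-\Delta_{\HH^n})^\alpha\phi u,\phi u\big)_{\HH^n}=(A^\alpha u,u)_{\RR^n}.
\]
Now $A=-\Delta_{\RR^n}+W\ge-\Delta_{\RR^n}\ge0$ in the sense of quadratic forms on the radial subspace, and the key analytic input is the operator monotonicity of $t\mapsto t^\alpha$ for $\alpha\in(0,1)$ (the L\"owner--Heinz inequality): it upgrades this form inequality to $A^\alpha\ge(-\Delta_{\RR^n})^\alpha$, whence $(A^\alpha u,u)_{\RR^n}\ge\V(-\Delta_{\RR^n})^{\alpha/2}u\V_{L^2(\RR^n)}^2$, as required.

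Combining the three comparisons, the numerator of $W_{\alpha,\HH^n}(\phi u)$ is strictly smaller than that of $W_{\alpha,\RR^n}(u)$ while its denominator is at least as large, so $W_{\alpha,\HH^n}(\phi u)<W_{\alpha,\RR^n}(u)$ for every radial $u$; taking suprema gives $W^{\sup}_{\alpha,\HH^n}\le W^{\sup}_{\alpha,\RR^n}$. For the reverse inequality I would record the fractional analogue of Proposition \ref{2.2parina}: a short computation shows $W_\alpha$ is again invariant under the scaling $g\mapsto rg$ (using $(-\Delta_{rg})^\alpha=r^{-\alpha}(-\Delta_g)^\alpha$ together with $\gamma=2-(n-2\alpha)(p-1)/(2\alpha)$ and $\rho=n(p-1)/(2\alpha)$), so the dilation argument of Proposition \ref{2.2parina} yields $W^{\sup}_{\alpha,\HH^n}\ge W^{\sup}_{\alpha,\RR^n}$, and the two bounds force equality. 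The same strict inequality additionally rules out a maximiser on $\HH^n$, exactly as in the remark following Theorem \ref{WFMT}.

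The genuinely new difficulty, and where I expect the main obstacle, is that the nonlocality of $(-\Delta)^\alpha$ destroys the clean conjugation identity that made the $\alpha=1$ case transparent: there is no pointwise formula expressing $\phi^{-1}(-\Delta_{\HH^n})^\alpha(\phi u)$ as a fractional Schr\"odinger operator on $\RR^n$ plus a manifestly positive potential. The resolution is to stop working pointwise and instead compare quadratic forms, where the positivity of $W$ and operator monotonicity do all the work; the remaining care is functional-analytic bookkeeping---self-adjointness and the identification of the form domain of $A^{\alpha/2}$ with $H^\alpha(\RR^n)$ (unproblematic since $W$ is bounded), the legitimacy of restricting $A^\alpha$ to the radial subspace, and the validity of the fractional P\'olya--Szeg\H{o} inequality on $\HH^n$.
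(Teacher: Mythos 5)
Your proposal is correct and reaches the theorem, but it routes the decisive step differently from the paper, so a comparison is worth recording. The shared scaffolding is identical: the $L^2$-isometry $T(u)=\phi u$ of (\ref{94sesh})--(\ref{95sesh}), the strict $L^{p+1}$ comparison, and the reduction to radial functions (your representation $\lambda^\alpha=c_\alpha\int_0^\infty(1-e^{-t\lambda})\,t^{-1-\alpha}dt$ applied through the spectral theorem is equivalent to the paper's subordination $e^{-t(-\Delta)^\alpha}=\int_0^\infty f_{t,\alpha}(s)e^{s\Delta}ds$ from ~\cite{Y}; both ultimately rest on (\ref{jnts}) and Theorem \ref{Drag}). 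Where you diverge is the fractional energy comparison: the paper expands $(-\Delta)^\alpha$ via Balakrishnan's formula and proves by an explicit resolvent identity that $F(t)/t=((-V\Delta_{S^{n-1}}+K_1V+K_2)u_1,u_1)_{\RR^n}+((t-\Delta_{\RR^n})^{-1}w,w)_{\RR^n}>0$ for radial data, whereas you first pass to the radial reducing subspace, identify $\phi^{-1}(-\Delta_{\HH^n})\phi$ there with $-\Delta_{\RR^n}+W$, $W>0$ bounded by (\ref{eq1})--(\ref{eq2}), and invoke L\"owner--Heinz. This is legitimate: the Heinz--Kato form version of operator monotonicity of $t\mapsto t^\alpha$, $\alpha\in(0,1)$, does hold for unbounded nonnegative self-adjoint operators, and in fact its standard proof runs through exactly the Balakrishnan integral plus resolvent monotonicity, so the paper's computation is essentially an unpacked, by-hand proof of the inequality you quote. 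You were also right to insist that the restriction to radial functions \emph{precede} the form comparison: since $V<0$, the angular correction in (\ref{perechi}) contributes with the unfavourable sign, the inequality $\phi^{-1}(-\Delta_{\HH^n})\phi\geq-\Delta_{\RR^n}$ fails off the radial subspace (take high angular frequency at large $r$), and the paper needs radiality of $u_1$ for the same reason, to kill $(V(-\Delta_{S^{n-1}})u_1,u_1)_{\RR^n}$. Two trade-offs: L\"owner--Heinz yields only $\geq$ where the paper obtains strict inequality of energies, but, as you observe, the strict $L^{p+1}$ comparison alone forces $W_{\alpha,\HH^n}(\phi u)<W_{\alpha,\RR^n}(u)$ and hence non-attainment. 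Conversely, for $W^{\sup}_{\alpha,\HH^n}\geq W^{\sup}_{\alpha,\RR^n}$ (which the paper leaves implicit), your scaling computation is fine, but note that the localization step of Proposition \ref{2.2parina} is not verbatim for a nonlocal operator: for $u$ supported in a small ball $U\subset\HH^n$, the quantity $((-\Delta_{\HH^n})^\alpha u,u)$ is not determined by the geometry of $U$ alone, and one must compare it with the spectral Dirichlet fractional energy on $U$; this follows from heat-kernel domination $0\leq p_{U,D}\leq p_{\HH^n}$ together with your Bochner-type formula, after first reducing to $u\geq 0$ (replacing $u$ by $|u|$ does not increase the fractional energy). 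That is a one-lemma patch rather than a flaw, and with it your argument is complete.
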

\begin{proof}
Morally, as in the proof of Theorem \ref{WFMT}, we want to compare $W_{\alpha, \RR^n}(u)$ with $W_{\alpha, \HH^n}(v)$ for functions $u \in H^\alpha(\RR^n), v \in H^\alpha(\HH^n)$. As usual, we use the isometric isomorphism $T$ defined before that keeps $L^2$-norms same and lowers the $L^{p + 1}$-norm on the hyperbolic side, that is, if $v = Tu$, then
\beq
\V u\V_{L^2(\RR^n)} = \V v\V_{L^2(\HH^n)}, \V u\V_{L^{p +1}(\RR^n)} > \V v\V_{L^{p + 1}(\HH^n)}. 
\eeq
Seeing what has gone before, comparing the supremum values of the fractional Weinstein functionals just amounts to comparing $\V (-\Delta_{\RR^n})^{\frac{\alpha}{2}}u\V_{L^2(\RR^n)}$ with $\V (-\Delta_{\HH^n})^{\frac{\alpha}{2}}\phi u\V_{L^2(\HH^n)}$. Now we use the following functional calculus (see ~\cite{B}; also see Proposition 3.1.12 of ~\cite{H}) 
\[
A^{\alpha} u = \frac{\text{sin} \alpha \pi}{\pi}\int^\infty_0 t^{\alpha - 1}(t + A)^{-1}A udt, \text{   }\forall u \in \mathcal{D}(A),
\]
where $A$ is a sectorial operator on a Banach space $X$ and $0 < \alpha < 1$. Now, it is known that on a Hilbert space $H$, a non-negative self-adjoint operator $A : \mathcal{D}(A) \subset H \longrightarrow H$ is sectorial with $\omega = 0$ (see Chapter 2, Subsection 2.1.1 of ~\cite{H}). \newline
So then, writing $(.,.)_{M}$ for the inner product in $L^2(M)$, where $M = \RR^n, \HH^n$, we get,
\[
((-\Delta_{\HH^n})^{\alpha}\phi u, \phi u)_{\HH^n} = \frac{\text{sin} \alpha \pi}{\pi}\int^\infty_0 \int_{\HH^n}t^{\alpha - 1}(t -\Delta_{\HH^n})^{-1}(-\Delta_{\HH^n}) (\phi u) \overline{\phi u} d\HH^ndt
\]
and
\[
((-\Delta_{\RR^n})^{\alpha} u, u)_{\RR^n} = \frac{\text{sin} \alpha \pi}{\pi}\int^\infty_0 \int_{\RR^n}t^{\alpha - 1}(t -\Delta_{\RR^n})^{-1}(-\Delta_{\RR^n}) u \overline{u} d\RR^ndt.
\]
So we have reduced the problem to comparing 
\[
\int_{\HH^n}(t -\Delta_{\HH^n})^{-1}(-\Delta_{\HH^n}) (\phi u) \overline{\phi u} d\HH^n
\]
with 
\[
\int_{\RR^n}(t -\Delta_{\RR^n})^{-1}(-\Delta_{\RR^n}) u \overline{u} d\RR^n.
\]
Now, if we let $u = u_1 + iu_2$, we will see that for the above comparison it is enough to consider real-valued $u$. More generally, consider a linear self-adjoint operator $L$ on $L^2(M)$ and a function $v = v_1 + iv_2$. Then,
\begin{align*}
\int_M Lv \overline{v} dM & = \int_M L(v_1 + iv_2)(v_1 - iv_2)dM = \int_M (Lv_1 + iLv_2)(v_1 - iv_2)dM \\
& = \int_M Lv_1 v_1 dM + \int_M Lv_2 v_2 dM + i\int_M (-Lv_1 v_2 + v_1Lv_2) dM.
\end{align*}
Here, $L = (\lambda - \Delta_M)^{-1}(-\Delta_M) $, where $M = \RR^n$ or $\HH^n$. If we can prove that for any real valued $\varphi \in \Cal{D}(L)$, $L\varphi$ is real-valued, then the symmetry of $L$ will imply that $\int_M (-Lv_1 v_2 + v_1Lv_2) dM = 0$. \newline
If $\varphi$ is real-valued, then so is $-\Delta \varphi = \psi$. For real-valued $f, g \in L^2(M)$, if $(\lambda - \Delta_M)(f + ig) = \psi$, then that would imply $-\Delta_M g = -\lambda g$ for $\lambda \geq 0$, which is impossible for $\lambda > 0$. Also, $\text{Spec }(-\Delta_{\HH^n}) \subset [\frac{(n - 1)^2}{4}, \infty)$, and since there are no $L^2$ harmonic functions on $\RR^n$, we can rule out $\lambda = 0$. So we have reduced the problem to the comparison of 
\[
A = \int_{\HH^n}(t -\Delta_{\HH^n})^{-1}(-\Delta_{\HH^n}) (\phi u) (\phi u) d\HH^n
\]
with 
\[
B = \int_{\RR^n}(t -\Delta_{\RR^n})^{-1}(-\Delta_{\RR^n}) (u) (u) d\RR^n,
\]
where $u$ is real-valued. So, let us call
\begin{align*}
F(t) & = ((t - \Delta_{\HH^n})^{-1}(-\Delta_{\HH^n})\phi u, \phi u)_{\HH^n} - ((t - \Delta_{\RR^n})^{-1}(-\Delta_{\RR^n}) u, u)_{\RR^n}\\
& = ((t - \phi^{-1}\Delta_{\HH^n}\phi)^{-1}(-\phi^{-1}\Delta_{\HH^n}\phi) u, u)_{\RR^n} - ((t - \Delta_{\RR^n})^{-1}(-\Delta_{\RR^n}) u, u)_{\RR^n}\\
& = (((t - \overline{\Delta})^{-1}(-\overline{\Delta}) - (t - \Delta_{\RR^n})^{-1}(-\Delta_{\RR^n})) u, u)_{\RR^n},
\end{align*}
where $\overline{\Delta} = \phi^{-1}\Delta_{\HH^n}\phi : L^2(\RR^n) \longrightarrow L^2(\RR^n)$. Writing $(t - \overline{\Delta})^{-1} u = u_1, (t - \Delta_{\RR^n})^{-1}u = u_2$, we get 
\begin{align*}
F(t) & = (-\overline{\Delta} u_1, u)_{\RR^n} - (-\Delta_{\RR^n} u_2, u)_{\RR^n}\\
& = (-\overline{\Delta} u_1, (t - \Delta_{\RR^n})u_2)_{\RR^n} - (-\Delta_{\RR^n} u_2, (t - \overline{\Delta})u_1)_{\RR^n}\\
& = t[(-\overline{\Delta} u_1, u_2)_{\RR^n} - (-\Delta_{\RR^n} u_2, u_1)_{\RR^n}].
\end{align*}
Writing $V(r) = V, K_1 = (\frac{n - 1}{2})(\frac{n - 3}{2}), K_2 = (\frac{n - 1}{2})^2$, we get from (\ref{oboseshe}) and (\ref{perechi}),
\begin{align*}
F(t)/t & = ((-\overline{\Delta}u_1 - (-\Delta_{\RR^n}))u_1, u_2)_{\RR^n}\\
& = ((-V\Delta_{S^{n - 1}} + K_1 V + K_2) u_1, u_2)_{\RR^n}\\
& = ((-V\Delta_{S^{n - 1}} + K_1 V + K_2)u_1, (t - \Delta_{\RR^n})^{-1}(t - \overline{\Delta})u_1)_{\RR^n}.
\end{align*}
Seeing that 
\[(t - \Delta_{\RR^n})^{-1}(t - \overline{\Delta}) = (t - \Delta_{\RR^n})^{-1}(t - \Delta_{\RR^n} + (-\overline{\Delta} - (-\Delta_{\RR^n})))\\
= I + (t - \Delta_{\RR^n})^{-1}(-\overline{\Delta} - (-\Delta_{\RR^n}))\]
we have 
\begin{align*}
F(t)/t & = ((-V\Delta_{S^{n - 1}} + K_1 V + K_2)u_1, (I + (t - \Delta_{\RR^n})^{-1}(-V\Delta_{S^{n - 1}} + K_1 V + K_2))u_1)_{\RR^n}\\
& = ((-V\Delta_{S^{n - 1}} + K_1 V + K_2)u_1, u_1)_{\RR^n} + ((-V\Delta_{S^{n - 1}} + K_1 V + K_2)u_1, (t - \Delta_{\RR^n})^{-1}\\
& \text{    }(-V\Delta_{S^{n - 1}} + K_1 V + K_2)u_1)_{\RR^n}\\
& = ((-V\Delta_{S^{n - 1}} + K_1 V + K_2)u_1, u_1)_{\RR^n} + ((t - \Delta_{\RR^n})w, w)_{\RR^n}\\
& > (V(-\Delta_{S^{n - 1}})u_1, u_1)_{\RR^n},
\end{align*}
where $w = (-V\Delta_{S^{n - 1}} + K_1 V + K_2)u_1$. If we now assume that $u_1$ is radial, then 
\[(V(-\Delta_{S^{n - 1}})u_1, u_1)_{\RR^n} = 0.\]
This means that $F(t)/t > 0$.\newline
Now, the reason that we can just choose $u_1$ radial in the above calculation is because we have
\beq\label{huhu}
W^{\text{sup}}_{\alpha,\HH^n} = \sup\{W_{\alpha,\HH^n}(u) : u \mbox{    is a radial function     } \in H^\alpha(\HH^n)\},
\eeq
and 
\beq\label{haha}
W^{\text{sup}}_{\alpha,\RR^n} = \sup\{W_{\alpha,\RR^n}(u) : u \mbox{    is a radial function     } \in H^\alpha(\RR^n)\}.
\eeq
(\ref{haha}) follows from (5.0.3) and (5.0.4) of ~\cite{CMMT1}.\newline
To show (\ref{huhu}), we need to verify that, replacing $u$ by the  radial decreasing rearrangement $u^*$ of $|u|$ lowers the kinetic energy term, that is, 
\[
\V (-\Delta_{\HH^n})^{\alpha/2}u^{*}\V^2_{L^2(\HH^n)} \leq \V (-\Delta_{\HH^n})^{\alpha/2} u\V^2_{L^2(\HH^n)}.
\]
This can be realized by the methods used in ~\cite{CM} as mentioned in the proof of Theorem \ref{WFMT}, in conjunction with the functional calculus used above. A proof more or less along such lines appears as Lemma 4.0.2 in ~\cite{CMMT1}, which we reproduce below. Taking this for granted, we have established that it is enough to compare the Weinstein functional values for radial functions in $H^\alpha (\RR^n)$ and $H^\alpha (\HH^n)$.\newline
Finally, we see that
\[
W_{\alpha, \RR^n}^{sup} = W_{\alpha, \HH^n}^{sup},
\]
and the corresponding fact that $W_{\alpha, \HH^n}^{sup}$ is not attained in $H^\alpha(\HH^n)$.
\end{proof}
The following lemma finishes the proof (for notational convenience,  in the lemma below, $-\Delta$ refers to $-\Delta_{\HH^n}$):
\begin{lemma} (~\cite{CMMT1}) Replacing $u \in H^\alpha(\HH^n)$ by the radial, decreasing rearrangement $u^*$ of $|u|$ lowers the term $\V (-\Delta)^{\frac{\alpha}{2}} u\V^2_{L^2(\HH^n)}$.
\end{lemma}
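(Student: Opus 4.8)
The plan is to reduce the assertion to the heat-semigroup rearrangement inequality (\ref{jnts}), which the excerpt has already established following \cite{CM} and Draghici's Theorem \ref{Drag}. Since $-\Delta_{\HH^n}$ is nonnegative and self-adjoint, functional calculus gives $\V (-\Delta)^{\alpha/2} u\V^2_{L^2(\HH^n)} = ((-\Delta)^{\alpha} u, u)_{\HH^n}$, so it suffices to prove the single scalar inequality
\[
((-\Delta)^{\alpha} u^{*}, u^{*})_{\HH^n} \le ((-\Delta)^{\alpha} u, u)_{\HH^n}.
\]

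First I would represent the fractional power through Bochner subordination of the heat semigroup: for $\alpha \in (0,1)$ and $u \in \mathcal{D}((-\Delta)^{\alpha/2}) = H^\alpha(\HH^n)$,
\[
(-\Delta)^{\alpha} u = \frac{\alpha}{\Gamma(1 - \alpha)}\int_0^\infty \big(u - e^{s\Delta_{\HH^n}} u\big)\,\frac{ds}{s^{1 + \alpha}},
\]
whence, pairing with $u$,
\[
((-\Delta)^{\alpha} u, u)_{\HH^n} = \frac{\alpha}{\Gamma(1 - \alpha)}\int_0^\infty \frac{(u,u)_{\HH^n} - (e^{s\Delta_{\HH^n}} u, u)_{\HH^n}}{s^{1 + \alpha}}\,ds.
\]
The constant $\alpha/\Gamma(1-\alpha)$ is positive and the integrand is nonnegative, since $e^{s\Delta_{\HH^n}}$ is an $L^2$-contraction. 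Equivalently one may use the Balakrishnan resolvent formula already employed in the proof of Theorem \ref{WFMT2}, the resolvent being the Laplace transform of the heat semigroup; the two routes are interchangeable.

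Next I would feed in the two defining features of the symmetric decreasing rearrangement of $|u|$ on $\HH^n$. Because rearrangement preserves every $L^s$ norm, the constant term is untouched: $(u^{*},u^{*})_{\HH^n} = \V u\V^2_{L^2} = (u,u)_{\HH^n}$. For the semigroup term, positivity of the heat kernel gives $(e^{s\Delta_{\HH^n}}u,u)_{\HH^n} \le (e^{s\Delta_{\HH^n}}|u|,|u|)_{\HH^n}$, and inequality (\ref{jnts}) applied to $f = |u|$ then yields, for every $s > 0$,
\[
(e^{s\Delta_{\HH^n}} u, u)_{\HH^n} \le (e^{s\Delta_{\HH^n}} u^{*}, u^{*})_{\HH^n}.
\]
Subtracting from the common $L^2$ norm shows that the integrand for $u^{*}$ is pointwise in $s$ no larger than that for $u$, so integrating against the positive weight $\frac{\alpha}{\Gamma(1-\alpha)}\,s^{-1-\alpha}\,ds$ delivers the desired comparison.

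The genuine work is analytic bookkeeping rather than any new inequality: I must justify the subordination representation and the interchange of the $s$-integral with the inner product for a general $u \in H^\alpha(\HH^n)$, not merely for smooth compactly supported $u$. This is cleanest through the spectral resolution $\{E_\mu\}$ of $-\Delta_{\HH^n}$: writing $(u,u)_{\HH^n} - (e^{s\Delta_{\HH^n}}u,u)_{\HH^n} = \int (1 - e^{-s\mu})\,d\V E_\mu u\V^2 \ge 0$, Tonelli's theorem legitimises the interchange and evaluates the double integral to $\int \mu^{\alpha}\,d\V E_\mu u\V^2 = \V (-\Delta)^{\alpha/2}u\V^2_{L^2}$, which is finite exactly because $u \in \mathcal{D}((-\Delta)^{\alpha/2})$; the spectral gap $\mathrm{Spec}(-\Delta_{\HH^n}) \subset [\tfrac{(n-1)^2}{4}, \infty)$ guarantees there is no spectral mass at $\mu = 0$ and that $(-\Delta)^{\alpha/2}$ is unambiguously defined. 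Once this is in place, the three displayed steps combine to give $\V (-\Delta)^{\alpha/2} u^{*}\V^2_{L^2(\HH^n)} \le \V (-\Delta)^{\alpha/2} u\V^2_{L^2(\HH^n)}$, which is the claim.
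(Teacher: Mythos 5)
Your proof is correct, and it takes a recognizably different route from the paper's, though both rest on the same bedrock. The paper writes $\V(-\Delta)^{\alpha/2}u\V^2_{L^2} = \lim_{t\to 0}t^{-1}((I-e^{-t(-\Delta)^\alpha})u,u)_{\HH^n}$ and therefore needs a rearrangement inequality for the \emph{fractional} semigroup $e^{-t(-\Delta)^\alpha}$ itself: it invokes Bochner subordination $e^{-t(-\Delta)^\alpha}=\int_0^\infty f_{t,\alpha}(s)\,e^{s\Delta}\,ds$ with $f_{t,\alpha}\geq 0$ (citing Yosida), deduces that the fractional kernel $p_\alpha(t,r)$ is nonnegative and monotonically decreasing in $r$ (using monotonicity of $p(t,r)$ from ~\cite{CM}), and then applies Draghici's Theorem \ref{Drag} afresh with $K_{12}=p_\alpha$. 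You instead subordinate at the level of the quadratic form, via the scalar identity $\mu^\alpha=\frac{\alpha}{\Gamma(1-\alpha)}\int_0^\infty(1-e^{-s\mu})\,s^{-1-\alpha}\,ds$ integrated against the spectral measure, which reduces the claim directly to the heat-semigroup inequality (\ref{jnts}) already established in the proof of Theorem \ref{WFMT}, applied at each fixed $s>0$ after the reduction $u\mapsto|u|$ by kernel positivity (a step the paper performs as well). Your route buys economy and some extra rigor: you never need the integral kernel of $e^{-t(-\Delta)^\alpha}$, its radial monotonicity, a second invocation of Draghici, or the exchange of the $t\to 0$ limit, and your Tonelli argument over the spectral resolution is the right way to handle general $u\in H^\alpha(\HH^n)$ --- note in particular that the same Tonelli identity, valid with values in $[0,\infty]$, applies to $u^*$ and is dominated pointwise in $s$ by the $u$ side, so the finiteness $u^*\in H^\alpha(\HH^n)$ comes for free. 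The paper's route buys a byproduct of independent interest, namely positivity and radial monotonicity of the fractional heat kernel $p_\alpha(t,r)$ on $\HH^n$; ultimately both arguments reduce to the same Draghici-based heat-kernel rearrangement inequality.
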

\begin{proof}
For $u \in H^\alpha(\HH^n)$, we have
\begin{align*}
\V (-\Delta)^{\frac{\alpha}{2}} u\V^2_{\HH^n} & = ((-\Delta)^\alpha u, u)_{\HH^n}\\
&  = \lim_{t \to 0} \frac{1}{t}((I - e^{-t(-\Delta)^\alpha})u, u)_{\HH^n}.
\end{align*}
To prove our lemma, it suffices to demonstrate that
\[
(e^{-t(-\Delta)^\alpha} u, u)_{\HH^n} \leq (e^{-t(-\Delta)^\alpha} u^*, u^*)_{\HH^n}.\]
Now,
\[
(e^{-t(-\Delta)^\alpha} u, u)_{\HH^n} = \int_{\HH^n}\int_{\HH^n} p_\alpha (t, \text{dist}(x, y)) u(x)u(y) dx dy,
\]
where $p_\alpha (t, \text{dist}(x, y))$ represents the integral kernel of the semigroup $e^{-t(-\Delta)^\alpha}$. We observe that
\[
e^{-t(-\Delta)^\alpha} = \int^\infty_0 f_{t, \alpha}(s) e^{s\Delta}ds, \text{   } t > 0,\]
with $f_{t, \alpha}(s) \geq 0$ (see ~\cite{Y}, pp. 260-261). So,
\[
e^{-t(-\Delta)^\alpha} u(x) = \int \bigg(\int^\infty_0 f_{t, \alpha}(s) p(t, \text{dist}(x, y))ds\bigg) u(y)dy,\]
which gives, 
\[
p_\alpha (t, \text{dist} (x, y)) = \int^\infty_0 f_{t, \alpha}(s)p (t, \text{dist} (x, y)) ds.\]
Hence, given $\alpha \in (0, 1)$, $t > 0$, and writing $r = \text{dist}(x, y)$, we have that $p_\alpha(t, r)$ is monotonically decreasing in $r$ (since we have from ~\cite{CM} that $p(t, r)$ is monotonically decreasing in $r$), and 
\[
p_\alpha (t, r) \geq 0.
\]
This gives, 
\[
(e^{-t(-\Delta)^\alpha} u, u)_{\HH^n} \leq (e^{-t(-\Delta)^\alpha} |u|, |u|)_{\HH^n}.\]
Now, we want to demonstrate that 
\[
(e^{-t(-\Delta)^\alpha} |u|, |u|)_{\HH^n} \leq (e^{-t(-\Delta)^\alpha} u^*, u^*)_{\HH^n}.\] 
But this follows from Theorem \ref{Drag}, by using $\Psi (f_1, f_2) = f_1f_2$ and $K_{12} = p_\alpha (r, t)$.
\end{proof}
\section{Appendix}
It might be of independent interest to observe whether a variant of the fractional Gagliardo-Nirenberg inequality holds on a non-compact rank 1 symmetric space, and particularly how the specific form of the heat kernel bears on this issue. To that end, we start by recalling that the heat kernel on a non-compact symmetric space of rank 1 satisfies (see ~\cite{HS})
\[
p(t, x, y) \leq (4\pi t)^{-n/2}e^{-d^2(x, y)/4t}\theta^{-1/2}(x, y)(1 + Ct),
\]
where $\theta : X \times X \longrightarrow (0, \infty)$ is defined by
\[
\theta(x, y) = \bigg| \text{det}\bigg(d(\text{Exp}_x)_{\text{Exp}^{-1}_x y}\bigg)\bigg|,
\]
where 
\[
d(\text{Exp}_x)_{\text{Exp}^{-1}_x y} : T_{\text{Exp}^{-1}_x y}(T_x X) \simeq T_x X \longrightarrow T_y X
\]
is an invertible linear map. Now, by Lemma 1 of ~\cite{HS}, we can clearly see that $\theta (x, y) \geq 1$, which gives
\[
p(t, x, y) \leq (4\pi t)^{-n/2}e^{-d^2(x, y)/4t}(1 + Ct).
\]
Now when $X = \RR^n$ or $\HH^n$, or even simply connected with nonpositive sectional curvature, it is known that $C = 0$. To tackle the general case, when {\bf $C \neq 0$}, we use the derivation in ~\cite{VSC} with certain modifications. For a very nice exposition of Varopoulos' proof and also including a proof of the Stein maximal ergodic theorem, see ~\cite{Bau}. As mentioned there, the noteworthy feature of Varopoulos' proof is to make use of the Stein maximal ergodic theorem bypassing the application of the more usual Marcinkiewicz interpolation techniques. We have
\begin{proposition}
On a non-compact symmetric space $X$ of rank 1, we have 
\[
\V u\V^{p + 1}_{L^{p + 1}} \lesssim \V (-\Delta)^{\alpha/2} u\V^{\theta(p + 1)}_{L^m}\V u\V^{(1 - \theta)(p + 1)}_{L^s},
\]
with 
\beq\label{rel}
\theta(\frac{1}{m} - \frac{\alpha}{n - 2}) + \frac{1 - \theta}{s} = \frac{1}{p + 1}.
\eeq
  
\end{proposition}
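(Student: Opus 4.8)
The plan is to follow the same two-step structure as in the proof of Proposition \ref{2.4}: first establish an endpoint Hardy--Littlewood--Sobolev estimate for the Riesz potential, and then interpolate it against an $L^s$ norm via H\"older's inequality. Concretely, it suffices to prove the mapping estimate
\[
\V (-\Delta)^{-\alpha/2} g\V_{L^r} \lesssim \V g\V_{L^m}, \qquad \frac{1}{r} = \frac{1}{m} - \frac{\alpha}{n - 2},
\]
valid for $1 < m < (n - 2)/\alpha$; setting $g = (-\Delta)^{\alpha/2} u$ and combining with the H\"older interpolation $\V u\V_{L^{p + 1}} \leq \V u\V_{L^r}^\theta \V u\V_{L^s}^{1 - \theta}$ (where $\theta/r + (1 - \theta)/s = 1/(p + 1)$) then yields precisely the asserted inequality with the relation (\ref{rel}). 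The entire novelty, compared with Proposition \ref{2.4}, lies in the appearance of $n - 2$ rather than $n$, which is forced by the extra factor $(1 + Ct)$ in the heat kernel bound.

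To obtain the endpoint estimate I would run Varopoulos' argument from \cite{VSC}, whose essential device is the Stein maximal ergodic theorem rather than Marcinkiewicz interpolation. Starting from the bound $p(t, x, y) \leq (4\pi t)^{-n/2}(1 + Ct)$ derived above (the Gaussian factor being $\leq 1$), together with conservativeness $\int_X p(t, x, y)\, dy \leq 1$, H\"older's inequality gives the ultracontractive-type estimate
\[
\V e^{t\Delta}\V_{L^m \to L^\infty} \leq M(t)^{1/m}, \qquad M(t) = (4\pi t)^{-n/2}(1 + Ct).
\]
The crucial observation is that $M(t) \sim t^{-n/2}$ as $t \to 0$ but $M(t) \sim C t^{-(n - 2)/2}$ as $t \to \infty$, so the large-time decay is governed by the exponent $(n - 2)/2$. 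Writing $(-\Delta)^{-\alpha/2} g = c_\alpha \int_0^\infty t^{\alpha/2 - 1} e^{t\Delta} g\, dt$ and splitting the integral at a level $T = T(x)$ to be optimized pointwise, I would bound the part $\int_0^T$ by the Stein maximal function $\mathcal{M}g(x) = \sup_{t > 0}|e^{t\Delta} g(x)|$, giving a contribution $\lesssim T^{\alpha/2}\mathcal{M}g(x)$, and the tail $\int_T^\infty$ by the $L^m \to L^\infty$ bound above, giving (in the regime controlled by the large-time decay) a contribution $\lesssim T^{\alpha/2 - (n - 2)/(2m)}\V g\V_{L^m}$. Optimizing in $T$ produces the pointwise interpolation
\[
|(-\Delta)^{-\alpha/2} g(x)| \lesssim (\mathcal{M}g(x))^{1 - \lambda}\V g\V_{L^m}^\lambda, \qquad \lambda = \frac{\alpha m}{n - 2}.
\]
Taking $L^r$ norms and invoking Stein's maximal theorem $\V \mathcal{M}g\V_{L^p} \lesssim \V g\V_{L^p}$ (applicable since $e^{t\Delta}$ is a symmetric Markov semigroup on $X$) with the bookkeeping choice $r(1 - \lambda) = m$ then yields exactly $1/r = 1/m - \alpha/(n - 2)$, completing the endpoint estimate.

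The main obstacle is precisely that the factor $(1 + Ct)$ destroys the exact scaling $M(t) = c\, t^{-\nu/2}$ that underlies the clean equivalence between ultracontractivity and the Sobolev inequality; consequently there is no single ``dimension,'' and a direct Marcinkiewicz/Young argument fails, because the integral $\int_0^\infty t^{\alpha/2 - 1} M(t)^{1/m - 1/r}\, dt$ cannot converge at both endpoints for a single pair $(m, r)$ (convergence at $0$ demands the exponent $n$, convergence at $\infty$ demands $n - 2$). The point of routing through the Stein maximal function is that the small-time part is absorbed ``for free'' into $\mathcal{M}g$, leaving only the large-time tail, whose decay $t^{-(n - 2)/2}$ dictates the effective dimension $n - 2$ and hence the relation (\ref{rel}). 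The remaining work is bookkeeping: checking conservativeness of the semigroup and the hypotheses of Stein's theorem, verifying that the optimal $T$ lands in the large-time regime for the relevant $g$, and tracking the constraint $1 < m < (n - 2)/\alpha$ needed for the tail integral to converge.
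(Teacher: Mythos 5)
Your proposal reproduces the paper's own argument essentially verbatim: both reduce the proposition via H\"older interpolation (as in Proposition \ref{2.4}) to the endpoint Hardy--Littlewood--Sobolev estimate with effective dimension $n-2$, prove it by splitting the subordination integral $(-\Delta)^{-\alpha/2}g = c_\alpha\int_0^\infty t^{\alpha/2-1}e^{t\Delta}g\,dt$ at a pointwise-optimized level, control the small-time piece by the semigroup maximal function and the tail by the large-time ultracontractive bound $t^{-(n-2)/2}$ coming from the $(1+Ct)$ factor, and close with the Stein maximal ergodic theorem in place of Marcinkiewicz interpolation, exactly as in Varopoulos' scheme followed by the paper. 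Your closing caveat about checking that the optimal splitting level lands in the large-time regime corresponds precisely to the paper's (equally brief) remark that $\delta$ can be arranged to satisfy $\delta C > 1$ by enlarging $C$, so nothing of substance differs.
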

\begin{proof}
From what has gone in Proposition (\ref{2.4}), we are just content with proving the Hardy-Littlewood-Sobolev estimates
\beq \label{20}
\V u\V_{L^q} \lesssim \V (-\Delta)^{\alpha/2} u\V_{L^p},
\eeq
where $q = \frac{(n-2)p}{n - 2 - \alpha p}$.
 
We use the following functional calculus (see ~\cite{Str})
\[
I = (-\Delta)^{-\alpha/2} u = \frac{1}{\Gamma(\alpha/2)}\int_0^\infty t^{\alpha/2 - 1}e^{t\Delta}u dt.
\]
Now, we rewrite the above as 
\begin{align*}
I & = \frac{1}{\Gamma(\alpha/2)}\int_0^\delta t^{\alpha/2 - 1}e^{t\Delta}u dt + \frac{1}{\Gamma(\alpha/2)}\int_\delta^\infty t^{\alpha/2 - 1}e^{t\Delta}u dt\\
& = I_1 + I_2,
\end{align*}
where $\delta$ will be chosen later, but we will see that it can always be chosen such that $\delta > 1/C$. This gives, when $t > \delta, 1 + Ct < 2Ct$, so that 
\[
I_2 = \frac{1}{\Gamma(\alpha/2)}\int_\delta^\infty t^{\alpha/2 - 1}e^{t\Delta}u dt \lesssim \frac{C^{1/p}}{\Gamma(\alpha/2)}\int_\delta^\infty t^{\alpha/2 - 1 - (n - 2)/2p}dt \V u\V_{L^p},
\]
the last step following from the fact that 
\[
p(t, x, y) \lesssim Ct^{-(n - 2)/2} \Longrightarrow  e^{t\Delta}u \lesssim |\frac{C^{1/p}}{t^{(n - 2)/2}}| \V u\V_{L^p}.
\]
Now, 
\begin{align*}
|I_1| & \leq \frac{1}{\Gamma(\alpha/2)}\int_0^\delta t^{\alpha/2 - 1} dt |u^*(x)| \leq \frac{2}{\alpha\Gamma(\alpha/2)}\delta^{\alpha/2}|u^*(x)|,
\end{align*}
where $u^*(x) = \sup_{t \geq 0} |P_t u(x)|$, $P_t$ being a diffusion semigroup.
Also, 
\[
|I_2| \lesssim \frac{C^{1/p}}{\Gamma(\alpha/2)}\frac{1}{\frac{n - 2}{2p} - \frac{\alpha}{2}}\delta^{\alpha/2 - (n-2)/2p}\V u\V_{L^p}.
\]
Putting everything together, we have 
\[
|I| \lesssim \frac{2}{\alpha \Gamma(\alpha/2)}\delta^{\alpha/2}|u^*(x)| + \frac{C^{1/p}}{\Gamma(\alpha/2)}\frac{1}{\frac{n - 2}{2p} - \frac{\alpha}{2}}\delta^{\alpha/2 - (n-2)/2p}\V u\V_{L^p}.
\]
Now, we can solve for $\delta$ which gives equality in the power mean inequality, or, we can treat the right hand side in the above expression as a function of a single variable $\delta$ and optimise it as such. On calculation, we find 
\[
\delta^{-\frac{n - 2}{2p}} = \frac{|u^*(x)|}{C^{1/p}\V u\V_{L^p}}.\] It is also clear from the solution of $\delta$ that $\delta$ increases as $C$ increases, which implies that we can increase $C$ if we want and finally get a $\delta$ which satisfies $\delta C > 1$.\newline
Also, we have
\[
|I| \leq \frac{2nC^{\alpha/(n-2)}}{\alpha(n - 2 - p\alpha)\Gamma(\alpha/2)}\V u\V^{\alpha p/(n - 2)}_{L^p}|u^*(x)|^{1 - \alpha p/(n - 2)}.
\]
Now
\[
q = \frac{p(n - 2)}{n - 2 - p\alpha} \Longrightarrow 1 - \alpha p/(n - 2) = p/q,
\]
which gives, 
\[
|I|^q \lesssim \V u\V_{L^p}^{q - p}|u^*(x)|^p.
\]
Now, we apply the Stein maximal ergodic theorem, which states that
\[
\V u^*\V_{L^p} \leq \frac{p}{p-1}\V u\V_{L^p}, p > 1, u \in L^p.
\]
The application of this finally gives us
\[
\int_X |I|^q \lesssim \V u\V^{q - p}_{L^p}\V u\V^p_{L^p},
\]
which is actually the HLS estimate we want.

\end{proof}
  
\begin{remark}
To prove (\ref{20}), one could also interpolate between $\alpha = 0$ and $\alpha = 1$. The result, in any case, should be clear to the expert. But we included this proof as a showcase of the particular techniques it potrays. 
\end{remark}
\vspace{1 cm}
\subsection{Acknowledgements}
I wish to thank Jeremy Marzuola for going through a draft copy of this write-up and making several important suggestions, and also for teaching me about the fractional G-N inequality. I also thank my advisor Michael Taylor for his invaluable guidance throughout. \newline
\bibliographystyle{plain}

\begin{thebibliography}{11}
\bibitem[1]{B} V. Banica, {\em The nonlinear Schr{\"o}dinger equation on hyperbolic space}, Comm. PDE, {\bf 32} (2007), 1643-1677.
\bibitem[2]{Ba} A. Balakrishnan, {\em Fractional powers of closed operators and the semigroups generated by them}, Pacific J. Math., {\bf 10} (1960), 419-437.
\bibitem[3]{Bau} F. Baudoin, {\em Online research and lecture notes}, fabricebaudoin.wordpress.com/.
\bibitem[4]{CM} H. Christianson, and J. Marzuola,, {\em Existence and stability of solitons for the nonlinear Schr{\"o}dinger equation on hyperbolic space}, Nonlinearity, {\bf 23} (2010), 89-106.
\bibitem[5]{CMMT} H. Christianson, J. Marzuola, J. Metcalfe, and M. Taylor, {\em Nonlinear bound states on weakly homogeneous spaces}, Preprint, 2012.
\bibitem[6]{CMMT1} H. Christianson, J. Marzuola, J. Metcalfe, and M. Taylor, {\em Nonlinear bound states for equations with fractional Laplacian operators on weakly homogeneous spaces}, in preparation.
\bibitem[7]{DS} E. Davies, and B. Simon, {\em Ultracontractive semigroups and some problems in analysis}, Aspects of Mathematics and its Applications, (1986), 265-280.
\bibitem[8]{D} C. Draghici, {\em Rearrangement inequalities with application to ratios of heat kernels}, Potential Analysis, {\bf 22} (4) (2005), 351-374.
\bibitem[9]{DGM} A. Debiard, B. Gaveau, and E. Mazet, {\em Theorems de comparison en geometrie Riemannienne}, Publ. RIMS, Kyoto Univ., {\bf 12} (1976), 391-425.
\bibitem[10]{FL} R. Frank and E. Lenzmann, {\em Uniqueness of non-linear ground states for fractional Laplacians in $\RR$,} Acta Math., {\bf 210} (2013), No. 2, 261 - 318.
\bibitem[11]{Ga} R. Gangolli, {\em Asymptotic behavior of spectra of compact quotients of certain symmetric spaces}, Acta Math., {\bf 121} (1968), 151-192.
\bibitem[12]{GT} D. Gilbarg and N. Trudinger, {\em Elliptic partial differential equations of second order}, Classics in Mathematics, Springer-Verlag, Berlin, 2001. 
\bibitem[13]{Gr} A. Grigor'yan, {\em Heat kernel upper bounds on a complete non-compact manifold}, Revista Matematica Iberoamericana, {\bf 10} no.2 (1994), 395 - 452.
\bibitem[14]{H} M. Haase, {\em The functional calculus for sectorial operators}, (2006), Preliminary version available online.
\bibitem[15]{Ha} M. Harris, {\em Numerically computing bound states}, Master's Thesis, UNC (2013), http://www.unc.edu/~marzuola/
\bibitem[16]{HS} B. Hall, and M. Stenzel, {\em Sharp bounds on the heat kernel on certain symmetric spaces of non-compact type}, Contemp. Math., {\bf 317} (2003), 117-135.
\bibitem[17]{L} M. Ledoux, {\em On improved Sobolev embedding theorems}, Math. Res. Lett., {\bf 10} (2003), 659-669.
\bibitem[18]{LL} E. Lieb, and M. Loss, {\em Analysis}, Graduate Studies in Mathematics, Vol 14, American Mathematical Society, Providence, RI, second edition, 2001.
\bibitem[19]{M} M. Mukherjee, {\em Nonlinear travelling waves on non-Euclidean spaces}, Preprint, 2013.
\bibitem[20]{N} J. Nash, {\em Continuity of solutions of parabolic and elliptic equations}, Amer. J. Math., {\bf 80} (1958), 931-954.
\bibitem[21]{Str} R. Strichartz, {\em Analysis of the Laplacian on the Complete Riemannian manifold}, J. Funct. Anal., {\bf 52} (1983), 48-79.
\bibitem[22]{SY} R. Schoen, and S-T. Yau, {\em Lectures on Differential Geometry}, International Press, 2010.
\bibitem[23]{V} N. Varopoulos, {\em Hardy-Littlewood theory for semigroups}, J. Funct. Anal., {\bf 63} (1985) no.2, 240-260.
\bibitem[24]{W} M. Weintein, {\em Nonlinear Schr{\"o}dinger equations and sharp interpolation estimates}, Comm. Math. Phys. {\bf 87} (1983), 567-575.
\bibitem[25]{Y} K. Yosida, {\em Functional Analysis}, Springer-Verlag, Berlin, 1965.
\bibitem[26]{VSC} N. Varopoulos, L. Saloff-Coste, and T. Coulhon, {\em Analysis and Geometry on Groups}, (1992), Cambridge University Press.


\end{thebibliography}


\end{document}